\def\Figures{./}
\newcommand{\dc}[1]{\llbracket #1 \rrbracket}
\theoremstyle{plain}
\newtheorem{proposition}{Proposition}
\theoremstyle{definition}
\newtheorem{assumption}{Assumption}
\theoremstyle{remark}
\newtheorem{remark}{Remark}
\newcommand{\coutint}{L}                                    
\newcommand{\coutfin}{K}                                    
\newcommand{\production}{\Theta}
\newcommand{\dynamics}{f}                                   
\def\mathscr{\EuScript}
\newcommand{\tribu}[1]{\mathscr{#1}}                        
\newcommand{\omeg}{\Omega}                                  
\newcommand{\trib}{\tribu{A}}                               
\newcommand{\prbt}{\mathbb{P}}                              
\newcommand{\espe}{\mathbb{E}}                              
\def\va#1{{\boldsymbol{\uppercase{#1}}}}
\newcommand{\np}[1]{(#1)}                                   
\newcommand{\bp}[1]{\big(#1\big)}                           
\newcommand{\Bp}[1]{\Big(#1\Big)}                           
\newcommand{\bgp}[1]{\bigg(#1\bigg)}                        
\newcommand{\Bgp}[1]{\Bigg(#1\Bigg)}                        
\newcommand{\na}[1]{\{#1\}}                                 
\newcommand{\ba}[1]{\big\{#1\big\}}                         
\newcommand{\normdelim}[1]{\np{#1}}                         
\newcommand{\bigdelim}[1]{\bp{#1}}                          
\newcommand{\Bigdelim}[1]{\Bp{#1}}                          
\newcommand{\biggdelim}[1]{\bgp{#1}}                        
\newcommand{\Biggdelim}[1]{\Bgp{#1}}                        
\newcommand{\normdelims}[2]{\normdelim{#1\mid#2}}           
\newcommand{\bigdelims}[2]{\bigdelim{#1\ \big|\ #2}}        %
\newcommand{\Bigdelims}[2]{\Bigdelim{#1\ \Big|\ #2}}        %
\newcommand{\biggdelims}[2]{\biggdelim{#1\ \bigg|\ #2}}     %
\newcommand{\nesp}[2][]{\espe_{#1}\normdelim{#2}}           
\newcommand{\besp}[2][]{\espe_{#1}\bigdelim{#2}}            
\newcommand{\Besp}[2][]{\espe_{#1}\Bigdelim{#2}}            
\newcommand{\bgesp}[2][]{\espe_{#1}\biggdelim{#2}}          
\newcommand{\Bgesp}[2][]{\espe_{#1}\Biggdelim{#2}}          
\newcommand{\nespc}[3][]{\espe_{#1}\normdelims{#2}{#3}}     
\newcommand{\bespc}[3][]{\espe_{#1}\bigdelims{#2}{#3}}      
\newcommand{\Bespc}[3][]{\espe_{#1}\Bigdelims{#2}{#3}}      
\newcommand{\bgespc}[3][]{\espe_{#1}\biggdelims{#2}{#3}}    
\def\eqsepv{\; , \enspace}                                  
\def\eqfinv{\; ,}                                           
\def\eqfinp{\; .}                                           
\newcommand{\xit}{\va{x}_{t}^{i}}
\newcommand{\xitp}{\va{x}_{t+1}^{i}}
\newcommand{\xitf}{\va{x}_{T}^{i}}
\newcommand{\uit}{\va{u}_{t}^{i}}
\newcommand{\wit}{\va{w}_{t}^{i}}
\newcommand{\zit}{\va{z}_{t}^{i}}
\newcommand{\zipt}{\va{z}_{t}^{i+1}}
\newcommand{\yit}{\va{y}_{t}^{i}}
\newcommand{\DP}{DP}
\newcommand{\SDDPd}{$\mathrm{SDDP}_{\!\mathrm{d}}$}
\newcommand{\SDDPc}{$\mathrm{SDDP}_{\!\mathrm{c}}$}
\newcommand{\DADP}{DADP}
\newcommand{\hesp}{\hspace{0.0cm}$\:$}
\newcommand{\hespa}{\hspace{0.5cm}$\;$}
\newcommand{\rouge}[1]{\textcolor{red}{#1}}
\newcommand{\bleue}[1]{\textcolor{blue}{#1}}
\newcommand{\cF}{\mathcal{F}}
\begin{document}

\begin{frontmatter}

\title{Stochastic decomposition applied to \\ large-scale hydro valleys management}

\author[ENSTAaddress]{P. Carpentier\corref{corresauthor}}
\cortext[corresauthor]{Corresponding author}
\ead{pierre.carpentier@ensta-paristech.fr}

\author[ENPCaddress]{J.-P. Chancelier}
\ead{jpc@cermics.enpc.fr}

\author[ENPCaddress]{V. Lecl\`{e}re}
\ead{vincent.leclere@enpc.fr}

\author[Efficacityaddress,ENPCaddress]{F. Pacaud}
\ead{f.pacaud@efficacity.com}

\address[ENSTAaddress]{UMA, ENSTA ParisTech, Universit\'{e} Paris-Saclay,
                       828 bd des Mar\'{e}chaux,91762 Palaiseau cedex France}
\address[ENPCaddress]{Université Paris-Est, CERMICS (ENPC), F-77455 Marne-la-Vallée, FRANCE}
\address[Efficacityaddress]{Efficacity, 14-20 boulevard Newton, F-77455 Marne-la-Vallée, FRANCE}

\date{\today}

\begin{abstract}
We are interested in optimally controlling a discrete time dynamical system
that can be influenced by exogenous uncertainties. This is generally called a
Stochastic Optimal Control~(SOC) problem and the Dynamic Programming~(DP)
principle is one of the standard way of solving it. Unfortunately, DP faces
the so-called curse of dimensionality: the complexity of solving DP equations
grows exponentially with the dimension of the variable that is
sufficient to take optimal decisions~(the so-called state variable).
For a large class of SOC problems, which includes important practical
applications in energy management, we propose an original way of obtaining
near optimal controls. The algorithm we introduce is based on Lagrangian
relaxation, of which the application to decomposition is well-known in the
deterministic framework. However, its application to such closed-loop problems
is not straightforward and an additional statistical approximation concerning
the dual process is needed. The resulting methodology is called Dual Approximate
Dynamic Programming (DADP). We briefly present DADP, give interpretations and
enlighten the error induced by the approximation. The paper is mainly devoted
to applying DADP to the management of large hydro valleys. The modeling of such
systems is presented, as well as the practical implementation of the methodology.
Numerical results are provided on several valleys, and we compare our approach
with the state of the art SDDP method.
\end{abstract}

\begin{keyword}
Discrete time stochastic optimal control \sep Decomposition methods \sep
Dynamic programming \sep Energy management
\MSC[2010] 90-08 \sep 90C06 \sep 90C15 \sep 90C39 \sep 49M27
\end{keyword}

\end{frontmatter}



\section{Introduction\label{sec:introduction}}

\subsection{Large-scale systems and energy applications\label{sec:intro-general}}

Consider a controlled dynamical system over a discrete and finite time horizon.
This system may be influenced by exogenous noises that affect its behavior.
We suppose that, at every instant, the decision maker is able to observe
the noises and to keep these observations in memory. Since it is generally
profitable to take available observations into account when designing future
decisions, we are looking for strategies rather than simple decisions.
Such strategies~(or policies) are feedback functions that map every instant
and every possible history of the system to a decision to be made.

\begin{figure}[ht]
\begin{center}
	\includegraphics[width=0.8 \textwidth]{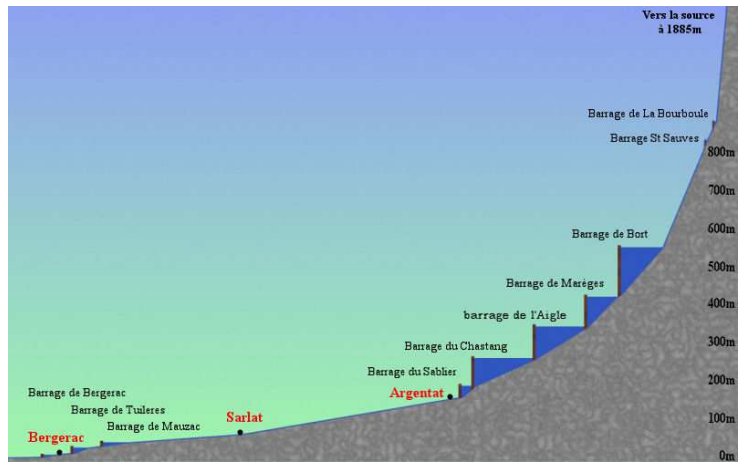}
\end{center}
\caption{\label{fig:Dordogne} The Dordogne river.}

\end{figure}

The typical applications we have in mind are in the field of energy management.
Consider a power producer that owns a certain number of power units.
Each unit has its own local characteristics such as physical constraints
that restrain the set of feasible decisions, and induces a production cost
or a revenue. The power producer has to control the power units so that
an overall goal is met. A classical example is the so-called unit commitment
problem \cite{takriti1996stochastic}
where the producer has to satisfy a global power demand at every instant.
The power demand, as well as other parameters such as unit breakdowns,
are random. The producer is looking for strategies that make the overall
production cost minimal, over a given time horizon. Another application,
which is considered in this paper, is the management of a large-scale
hydro valley: here the power producer manages a cascade of dams, and
maximizes the revenue obtained by selling the energy produced by
turbinating the water inside the dams. Both natural inflows in water
reservoirs and prices are random. In all these problems, both the number
of power units and the number of time steps are usually large \cite{deMatos_JCAM_2015}.
\subsection{Standard resolution methods\label{sec:intro-resolution}}

One classical approach when dealing with stochastic dynamic optimization
problems is to discretize the random inputs of the problem using a scenario tree.
Such an approach has been widely studied within the stochastic programming
community (\cite{HeitschRomisch09}, \cite{ShapiroDentchevaRuszczynski09}),
and also used to model and solve energy problems~\cite{PflugPichler14}.
One of the advantages of such a technique is that, as soon as the scenario
tree is drawn, the derived problem can be treated by classical mathematical
programming techniques. Thus, a number of decomposition methodologies have
been proposed (see for instance \cite{RockafellarWets91},
\cite{CarpentierCohenCulioliRenaud96}, \cite{ruszczynski1997decomposition}, \cite[Chap.~3]{StochasticProgramming03})
and applied to energy planning problems~\cite{BacaudLemarechalRenaudSagastizabal01}.
The way to combine the discretization of expectation together with
the discretization of information in a general setting has been presented in
\cite{HeitschRomischStrugarek05}, \cite{CCCD2015} and \cite{PflugPichler14}).
However, in a multi-stage setting, this methodology suffers from the drawback
that arises with scenario trees: as it was pointed out by~\cite{Shapiro06},
the number of scenarios needed to achieve a given accuracy grows exponentially
with the number of time steps of the problem.

The other natural approach to solve SOC problems is to rely on the dynamic
programming~(DP) principle (see \cite{Bellman57}, \cite{Puterman94}, \cite{BertsekasDP}).
The core of the DP
approach is the definition of a state variable that is, roughly speaking,
the variable that, in conjunction with the time variable, is sufficient
to take an optimal decision at every instant. It does not have the drawback
of the scenario trees concerning the number of time steps since strategies
are, in this context, depending on a state variable whose space dimension
does not grow with time\footnote{In the case of power management,
the state dimension is usually linked to the number of power units.}.
However, DP suffers from another drawback which is the so-called
\emph{curse of dimensionality}: the complexity of solving the DP equation
grows exponentially with the state space dimension. Hence, solving the DP
equation by brute force is generally intractable when the state space
dimension goes beyond several units.
In \cite{VezolleVialleWarin09}, the authors
were able to solve DP on a~$10$ state variables energy management problem,
using parallel computation coupled with adequate data distribution,
but the DP limits are around $5$ state variables in a straightforward
use of the method.

Another popular idea is to represent the value functions~(solutions of the
DP equation) as a linear combination of a priori chosen basis functions
(see \cite{BellmanDreyfus59} \cite{BertsekasTsitsiklis96}). This approach,
called Approximate Dynamic Programming (ADP) has become very popular and
the reader is referred to \cite{Powell11} and \cite{Bertsekas12}
for a precise description of ADP. This approximation drastically reduces
the complexity of solving the DP equation. However, in order to be
practically efficient, such an approach requires some a priori information
about the problem, in order to define a well suited functional subspace.
Indeed, there is no systematic means to choose the basis functions and several
choices have been proposed in the literature \cite{TsitsiklisVanRoy96}.

Last but not least is the popular
DP-based method called Stochastic Dual Dynamic Programming (SDDP).
Starting with the seminal work \cite{VanSlyke_SIAM_1969}, the SDDP method
has been designed in~\cite{PereiraPinto91}. It has been widely used in
the energy management context and lately regained interest in the Stochastic Programming
community~\cite{PhilpottGuan08}, \cite{Shapiro10}. The idea is somehow
to extend Kelley's cutting plane method to the case of multi-stage problems.
Alternatively it can be seen as a multistage Benders (or L-shaped) decomposition method
with sampling.
It consists in a succession of forward (simulation) and backward (Bellman
function refining) passes that ultimately aim at approaching the Bellman
function by the supremum of affine hyperplanes (cuts) generated during the backward passes.
It provides an efficient alternative to simply discretizing the state space
to solve the DP equation. In the convex case with finite support random
variables, it has been proven by~\cite{Girardeau_MOR_2015} that the method
converges to optimality.

\subsection{Decomposition approach\label{sec:intro-decomposition}}

When dealing with large-scale optimization problems,
the decomposition-coordination approach aims at finding a solution to
the original problem by iteratively solving subproblems of smaller dimension.
In the deterministic case, several types of decomposition have been proposed
(e.g. by prices, by quantities or by interaction prediction) and unified
in~\cite{Cohen80} using a general framework called Auxiliary Problem Principle.
In the open-loop stochastic case, i.e. when controls do not rely on any
observation, it is proposed in~\cite{CohenCulioli90} to take advantage of both
decomposition techniques and stochastic gradient algorithms.

The natural extension of these techniques in the closed-loop stochastic case
(see~\cite{BartyRoyStrugarek05}) fails to provide decomposed state dependent
strategies. Indeed, the optimal strategy of a subproblem depends on the state
of the whole system, and not only on the local state. In other words,
decomposition approaches are meant to decompose the control space, namely
the range of the strategy, but the numerical complexity of the problems
also arises because of the dimensionality of the state space, that is
to say the domain of the strategy.

We recently proposed a way to use price decomposition within the closed-loop
stochastic case. The coupling constraints, namely the constraints preventing
the problem from being naturally decomposed, are dualized using a Lagrange
multiplier (price). At each iteration, the price decomposition algorithm
solves each subproblem using the current price, and then uses the solutions
to update it. In the stochastic context, the price is a random process
whose dynamics is not available, so the subproblems do not in general fall
into the Markovian setting. However, in a specific instance of this problem
\cite{TheseStrugarek}, the author exhibited a dynamics for the optimal multiplier
and showed that these dynamics were independent with respect to the decision
variables. Hence it was possible to come down to the Markovian framework and
to use DP to solve the subproblems in this case. Following this idea, it is
proposed in~\cite{BartyCarpentierGirardeau09} to choose a parameterized dynamics
for these multipliers in such a way that solving subproblems using DP becomes
possible. While the approach, called Dual Approximate Dynamic Programming~(DADP),
showed promising results on numerical examples, it suffers from the fact that
the induced restrained dual space is non-convex, leading to some numerical
instabilities. Moreover, it was not possible to give convergence results
for the algorithm.

The method has then been improved through a series of PhD theses
(\cite{TheseGirardeau}, \cite{TheseAlais} and \cite{TheseLeclere}) both
from the theoretical and from the practical point of view. The methodology
still relies on Lagrangian decomposition. In order to make the resolution
of subproblems tractable, the core idea is to replace the current Lagrange
multiplier by its conditional expectation with respect to some
\emph{information process}, at every iteration. This information process
has to be a priori chosen and adapted to the natural filtration. Assume that
the information process is driven by a dynamics: the state in each
subproblem then consists of the original state augmented by the new state
induced by the information process, making the resolution of the subproblem
tractable by DP. The quality of the results produced by
the algorithm highly depends on the choice of this information variable.
An interesting point is that approximating the multipliers
by their conditional expectations has
an interpretation in terms of a relaxed optimization problem: this revisited
DADP algorithm in fact aims at solving an approximate primal problem where
the almost-sure coupling constraint has been replaced by its
conditional expectation with respect to the information variable.
In other words, this methodology consists in solving a relaxed
primal problem, hence giving a lower bound of the true optimal cost.
Another consequence of this approximation is that the solutions
obtained by the DADP algorithm do not satisfy the initial almost-sure
coupling constraint, so we must rely on a heuristic procedure to produce
a feasible solution to the original problem.

\subsection{Contents of the paper\label{sec:intro-contents}}

The main contribution of the paper is to give a practical algorithm aiming
at solving large scale stochastic optimal control problems and providing
closed-loop strategies. The numerous approximations used in the algorithm,
and especially the one allowing for feasible strategies, make difficult to
theoretically assess the quality of the solution finally adopted.
Nevertheless, numerical implementation shows that the method is promising
to solve large scale optimization problems such as those encountered
in the field of energy management.

The paper is organized as follows. In \S\ref{sec:framework},
we present the hydro valley management problem, the corresponding general
SOC formulation and the DP principle. We then concentrate on the spatial
decomposition of such a problem and the difficulties of using DP
at the subproblem level. In \S\ref{sec:DADP}, we present the DADP
method and give different interpretations. We then propose a way to
recover an admissible solution from the DADP results and we briefly discuss
the theoretical and practical questions associated to the convergence and
 implementation of the method. Finally, in \S\ref{sec:numerical},
we apply the DADP method to the management of hydro valleys. Different
examples, corresponding to either academic or realistic valleys,
are described. A comparison of the method with SDDP is outlined.

\subsection{Notations}
We will use the following notations, considering a probability space $(\omeg,\trib,\prbt)$:
\begin{itemize}
	\item bold letters for random variables, normal font for their realizations;
	\item $\va X \preceq \cF_t$ (resp. $\va X \preceq \va Y$)
	means that the random variable $\va X$ is measurable with respect to the $\sigma$-algebra $\cF_t$
	(resp. with respect to the $\sigma$-algebra generated by $\va Y$, denoted
    by~$\sigma(\va{y})$);
	\item $x$ generally stands for the state, $u$ for the control, $w$ for an exogeneous noise;
	\item $L_t$ for a cost function at time $t$, $K$ for a final cost function;
	\item $V_t$ represent a Bellman's value function at time $t$;
	\item $\dc{i,j}$ is the set of integer between $i$ and $j$;
	\item the notation~$\va{x}^{i}$ (resp.~$\va{u}^{i}$ and $\va{z}^{i}$) stands for the discrete
time state process~$(\va{x}_{0}^{i},\ldots,\va{x}_{T}^{i})$ (resp. control processes
$(\va{u}_{0}^{i},\ldots,\va{u}_{T-1}^{i})$, $(\va{z}_{0}^{i},\ldots,\va{z}_{T-1}^{i})$).
\end{itemize}


\section{Mathematical formulation\label{sec:framework}}

In this section, we present the modeling of a hydro valley
and the associated optimization framework.

\subsection{Dams management problem\label{sec:frame-problem}}

We consider a hydro valley constituted of $N$~cascaded dams as represented in
Figure~\ref{fig:Valley}. The water turbinated at a dam produces energy which is
sold on electricity markets, and then enters the nearest downstream dam.\footnote{Note
that the valley geometry may be more complicated than a pure cascade: see for example
the valleys represented at Figure~\ref{fig:academic}.}
\begin{figure}[ht]
\begin{center}
	\includegraphics[width=0.8 \textwidth]{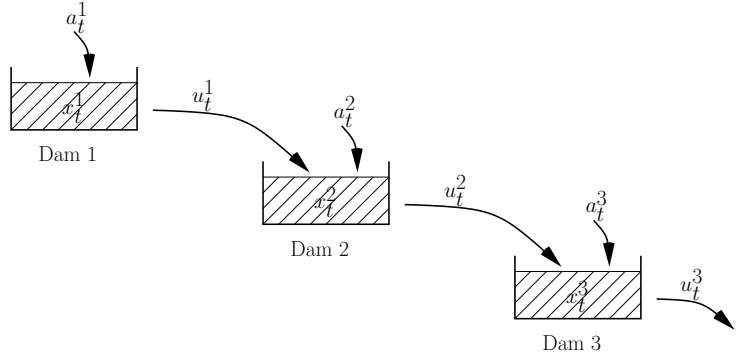}
\end{center}
\caption{\label{fig:Valley} Operating scheme of a hydro valley with $3$ dams.}
\end{figure}

We formulate the problem of maximizing the cascade revenue over a discrete time
horizon~$\{0,1,\ldots,T\}$. The representative variables of dam~$i$ at time~$t$ are:
$u_{t}^{i}$ for the water turbinated, $x_t^i$ for the current water volume, $a_t^i$ for the natural
water inflow entering dam $i$, $p_t^i$ for the market value of the water of dam $i$.
The randomness is given by $w_{t}^{i} = (a_{t}^{i},p_{t}^{i})$.

The modeling of a dam takes into account a possible overflow: the spilled water
does not produce electricity, but enters the next dam (see~Figure~\ref{fig:Dam}).
\begin{figure}[ht]
\begin{center}
	\includegraphics[width=0.4 \textwidth]{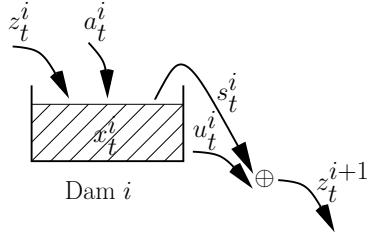}
\end{center}
\caption{\label{fig:Dam} Dam behavior.}
\end{figure}
\begin{itemize}
\item The dam dynamics reads
\begin{align*}
x_{t+1}^{i} & = x_{t}^{i}-u_{t}^{i}+a_{t}^{i}+z_{t}^{i}-s_{t}^{i}
              = f_{t}^{i}(x_{t}^{i},u_{t}^{i},w_{t}^{i},z_{t}^{i}) \eqfinv
\end{align*}
where~$s_{t}^{i}$ is the volume of water spilled by overflow of the dam:
\begin{equation*}
s_{t}^{i} = \max\ba{0,x_{t}^{i}-u_{t}^{i}+a_{t}^{i}+z_{t}^{i}-\overline{x}^{i}} \eqfinp
\end{equation*}
The constant value~$\overline{x}^{i}$ stands for the maximal capacity of dam~$i$.
The outflow of dam~$i$, that is, the sum of the turbinated
water and of the spilled water, is denoted by~$z_{t}^{i+1}$:
\begin{align*}
z_{t}^{i+1} & = u_{t}^{i} + s_{t}^{i}  = g_{t}^{i}(x_{t}^{i},u_{t}^{i},w_{t}^{i},z_{t}^{i}) \eqfinp
\end{align*}
Note that these dynamic equations are nonlinear because of the~$\max$ operator
in the definition of the spilled water volume.
We assume the \emph{Hazard-Decision} information structure: the control~$u_{t}^{i}$
applied at time~$t$ is chosen once the noise~$w_{t}^{i}$ at time~$t$ has been
observed. It is thus possible to ensure that the dam always remains above its
minimal admissible volume~$\underline{x}^{i}$ by limiting the control range:
\begin{equation*}
\underline{u}^{i} \leq u_{t}^{i} \leq \min\ba{\overline{u}^{i},x_{t}^{i}+a_{t}^{i}+z_{t}^{i}-\underline{x}^{i}} \eqfinp
\end{equation*}
\begin{remark}
As will be seen in~\S\ref{sec:numerical}, the typical time step length
we use is the month (with a time horizon of one year). It is thus reasonable
to assume the hazard-decision framework, the control applied for a given month
being in fact implemented each day taking into account the observed information
on a daily basis.
\end{remark}
 \item The gain of the dam is the sum of different terms.
 \begin{itemize}
 \item Gain at each time $t < T$:
 \begin{equation*}
 p_{t}^{i}u_{t}^{i} - \epsilon (u_{t}^{i})^{2} \; ;
 \end{equation*}
 we denote by~$L_{t}^{i}(x_{t}^{i},u_{t}^{i},w_{t}^{i},z_{t}^{i})$
 the \emph{opposite} of this gain (cost).
 The quadratic terms~$\epsilon (u_{t}^{i})^{2}$ ensure the strong convexity
 of the cost function. These terms model the operating cost of the turbine
 and are usually small.
 \item Final gain at time $T$:
 \begin{equation*}
 - a^{i} \min\na{0,x_{T}^{i}-\widehat{x}^{i}}^{2} \; ;
 \end{equation*}
 we again denote by~$K^{i}\bp{x_{T}^{i}}$ the \emph{opposite} of this gain.
 It corresponds to a quadratic penalization around a target value~$\widehat{x}^{i}$
 representing the desired water volume in the dam at the end of the time horizon.
 \end{itemize}
 Taking into account the opposite of the gains, we thus have to deal with
 a \emph{minimization problem}.
\end{itemize}

Thus the global optimization problem reads
\begin{subequations}
\label{pb:normal}
\begin{align}
\min_{(\va{x},\va{u},\va{z})} \quad &
\bgesp{ \sum_{i=1}^{N} \Bp{
\sum_{t=0}^{T-1} L_{t}^{i}\bp{\xit,\uit,\wit,\zit} + K^{i}\bp{\xitf}}} \eqfinv
\label{pb:normal-cou} \\
s.t. \qquad & \va{x}^{i}_{0} \quad \text{given} \eqfinv \nonumber \\
& \xitp = f_{t}^{i}(\xit,\uit,\wit,\zit)
          \eqsepv i\in\dc{1,N} \eqsepv t\in\dc{0,T-1} \eqfinv \label{pb:normal-dyn} \\
& \uit \preceq \sigma(\va{w}_{0},\ldots,\va{w}_{t}) \eqsepv
               \qquad\:\: i\in\dc{1,N} \eqsepv t\in\dc{0,T-1} \label{pb:normal-mes} \eqfinv \\
& \va{z}^{1}_{t} = 0 \eqfinv \nonumber \\
& \zipt = g_{t}^{i}(\xit,\uit,\wit,\zit)
          \eqsepv i\in\dc{1,N} \eqsepv t\in\dc{0,T-1} \eqfinp \label{pb:normal-cpl}
\end{align}

Equations~\eqref{pb:normal-mes} represent the so-called non-anticipativity constraints,
that is, the fact that each control~$\uit$, considered as a random variable, has to be
\emph{measurable} with respect to the sigma-field $\sigma(\va{w}_{0},\ldots,\va{w}_{t})$
generated by the noise sequence~$(\va{w}_{0},\ldots,\va{w}_{t})$ up to time~$t$.
\end{subequations}

\subsection{A more generic formulation\label{sec:frame-compact}}


With a slight abuse of notation,\footnote{which consists in denoting by~$\uit$
the pair~$(\uit,\zit)$} the stochastic optimization problem formulated
at \S\ref{sec:frame-problem} reads
\begin{subequations}
\label{pb:compact}
\begin{align}
\label{pb:compact-cou}
\min_{(\va{x}^{i},\va{u}^{i})_{i\in\dc{1,N}}} \;
& \bgesp{\sum_{i=1}^{N}
  \Bp{\sum_{t=0}^{T-1}\coutint^{i}_{t}(\va{x}^{i}_{t},\va{u}^{i}_{t},\va{w}_{t})+
      \coutfin^{i}(\va{x}^{i}_{T})}} \eqfinv
      \\
\text{s.t.} \quad
& \va{x}^{i}_{t+1} = \dynamics^{i}_{t}(\va{x}^{i}_{t},\va{u}^{i}_{t},\va{w}_{t})
  \eqfinv \quad \va{x}^{i}_{0} \quad \text{given}
  \label{pb:compact-dyn} \eqfinv
  \\
& \va{u}^{i}_{t} \preceq \sigma(\va{w}_{0},\ldots,\va{w}_{t})
  \label{pb:compact-mes} \eqfinv
  \\
& \sum_{i=1}^{N}\production^{i}_{t}(\va{x}^{i}_{t},\va{u}^{i}_{t},\va{w}_{t})=0
  \label{pb:compact-cpl} \eqfinp
\end{align}
Constraints~\eqref{pb:compact-dyn} represent the dynamics and
constraints~\eqref{pb:compact-mes} are the non-anticipativity constraints.
The last constraints~\eqref{pb:compact-cpl} express the interactions between
the dams in a more general way than Equations~\eqref{pb:normal-cpl}. They
represent an \emph{additive} coupling with respect to the different production
units, which is termed the ``spatial coupling of the problem''. Such a general
modeling covers other cases than the cascade problem, such that the unit
commitment problem, or the problem of exchanging energy on a smart grid.
\end{subequations}

\subsection{Dynamic Programming like approaches\label{sec:frame-DP}}

In the remainder of the paper, we assume that we are in the so-called
\emph{white noise} setting.

\begin{assumption}
Noises $\va{w}_{0},\ldots,\va{w}_{T-1}$ are independent over time.
\label{ass:white}
\end{assumption}
This assumption is of paramount importance in order to use
Dynamic Programming or related approaches such that Stochastic
Dual Dynamic Programming since in that case the controls given
by DP or SDDP are the \emph{optimal} ones for Problem~\ref{pb:compact}
(they are given as feedback functions depending on the state variable).
This assumption can be alleviated, in the case where it is possible
to identify a dynamics in the noise process (such as an ARMA model),
and by incorporating this new dynamics in the state
variables (see e.g. \cite{Maceira04} on this topic).

Under Assumption~\ref{ass:white}, Dynamic Programming (DP) applies
to Problem~\eqref{pb:compact}
(see e.g ~\cite{Bellman57,Puterman94,BertsekasDP} for the general theory):
there is no optimality loss to seek each control $\va{u}^{i}_{t}$ at time $t$
as a function of both the state and the noise at time~$t$.\footnote{Remember that
we have assumed the Hazard-Decision framework.}  Then, thanks to the measurability
properties of the control the Bellman functions~$V_{t}$
are obtained by solving the Dynamic Programming equation backwards in time:
\begin{subequations}
\label{eq:Bellman-SDP}
\begin{align}
V_{T}(x_T)
& = \sum_{i=1}^{N} \coutfin^{i}(x^{i}_T) \eqfinv \\
V_{t}(x_t)
& =
    \espe \bigg(\min_{{u}^{1},\ldots,{u}^{N}}
    \sum_{i=1}^{N} \coutint^{i}_{t}(x^{i},\va{u}^{i},\va{w}_{t})
   +  V_{t+1}\bp{\dynamics_{t}(x_t,{u}_t,\va{w}_{t}))}
  \bigg) \eqfinp
\end{align}
\end{subequations}
where $x_t=(x_t^1,\dots,x_t^N)$, $u_t=(u_t^1,\dots,u_t^N)$ and
$f_t(x_t,u_t,\va w_t)$ is the collection of new states
$f^i_t(x^1_t,u^i_t,\va w_t)$.

The DP equation is agnostic to whether the state and control variables are
continuous or discrete, whether the constraints and the cost functions
are convex or not, etc.
However, in order to exhaustively solve it we need to have discrete state,
and  to be able to solve each equation to optimality. In practice, the method
is subject to the curse of dimensionality and cannot be used
for large-scale optimization problems. For example, applying DP to
dams management problems is practically untractable for more than five dams
(see the results given at~\S\ref{sec:num-academic}).

Another way to compute the Bellman functions asociated to Problem~\eqref{pb:compact}
is to use the Stochastic Dual Dynamic Programming (SDDP) method.
The method has been first described in~\cite{PereiraPinto91}, and its
convergence has been analysed in~\cite{PhilpottGuan08} for the linear
case and in~\cite{Girardeau_MOR_2015} for the general convex case.
SDDP recursively constructs an approximation of each Bellman function
as the supremum of a number of affine functions, thus exploiting
the convexity of the Bellman functions (arising from the convexity
of the cost and constraint functions).
SDDP has been used for a long time for solving large-scale hydrothermal
problems (see~\cite{deMatos_JCAM_2015} and the references therein) and
allows to push the limits of DP in terms of state dimension
(see the results in~\S\ref{sec:num-challenge}).

\subsection{Spatial coupling and approach by duality\label{sec:frame-coupling}}

A standard way to tackle large-scale optimization problems is to use
Lagrange relaxation in order to split the original problem into a collection
of smaller subproblems by dualizing coupling constraints.
As far as Problem~\eqref{pb:compact} is concerned, we have in mind to
use DP for solving the subproblems and thus want to dualize
the spatial coupling constraints~\eqref{pb:compact-cpl} in order
to formulate subproblems, each incorporating a single dam.
The associated Lagrangian~$\mathcal{L}$ is accordingly
\begin{multline*}
\mathcal{L}\bp{\va{x},\va{u},\va{\Lambda}} = \Bgesp{\sum_{i=1}^{N}\bgp{\sum_{t=0}^{T-1} \coutint^{i}_{t}(\va{x}^{i}_{t},\va{u}^{i}_{t},\va{w}_{t}) + \coutfin^{i}(\va{x}^{i}_{T}) \\
+ \sum_{t=0}^{T-1} \va{\Lambda}_{t} \cdot \production^{i}_{t}(\va{x}^{i}_{t},\va{u}^{i}_{t},\va{w}_{t})}}
\eqfinv
\end{multline*}
where the multiplier~$\va{\Lambda}_{t}$ associated to
Constraint~\eqref{pb:compact-cpl} is a random variable. From
the measurability of the variables~$\va{x}^{i}_{t}$, $\va{u}^{i}_{t}$
and~$\va{w}_{t}$, we can assume without loss of optimality that
the multipliers~$\va{\Lambda}_{t}$ are $\sigma(\va{w}_{0},\ldots,\va{w}_{t})$-measurable
random variables.

In order to be able to apply duality theory to the problem (which is mandatory
for algorithmic resolution), we make the two following assumptions.
\begin{assumption}
A saddle point of the Lagrangian~$\mathcal{L}$ exists.
\label{ass:cqc}
\end{assumption}
\begin{assumption}
Uzawa algorithm applies to compute a saddle-point of~$\mathcal{L}$.
\label{ass:uzawa}
\end{assumption}
Assumption~\ref{ass:cqc} corresponds to a Constraint Qualification condition
and ensures the existence of an optimal multiplier. Assumption~\ref{ass:uzawa}
allows to use a gradient ascent algorithm to compute the optimal multiplier.
An important question in order to be able to satisfy these two assumptions
is the choice of the spaces where the various random variables of the problem
are living in. Duality theory and associated algorithms have been extensively
studied in the framework of Hilbert spaces \cite{EkelandTemam92}, but
the transition to the framework of stochastic optimal control poses difficult
challenges \cite{Rockafellar_PJM_1968,Rockafellar_PJM_1971}, which will be briefly
presented in \S\ref{sec:DADP-questions}.\footnote{Remember that the aim of the present
paper is mainly to present numerical results. The reader is referred
to \cite{TheseLeclere} for these theoretical questions.} One way to get
rid of these difficulties is to assume that the space~$\omeg$ is finite.

When using the Uzawa algorithm to compute a saddle-point of the Lagrangian,
the minimization step with respect to~$(\va{x}^{i},\va{u}^{i})_{i\in\dc{1,N}}$
splits in~$N$ independent subproblems each depending on a single pair
$(\va{x}^{i},\va{u}^{i})$, and therefore allows for a dam by dam decomposition.
More precisely, the~$k$-th iteration of Uzawa algorithm consists of the two
following steps.
\begin{enumerate}
\item Solve Subproblem $i$, $i\in\dc{1,N}$,
with fixed $\va{\Lambda}^{(k)}$:
\begin{subequations}
\label{pb:subproblem}

\begin{align}
\nonumber
\min_{\va{x}^{i},\va{u}^{i}} \; &
\Besp{\sum_{t=0}^{T-1}
\coutint^{i}_{t}(\va{x}^{i}_{t},\va{u}^{i}_{t},\va{w}_{t})  + 
 \va{\Lambda}_{t}^{(k)} \cdot\production^{i}_{t}(\va{x}^{i}_{t},\va{u}^{i}_{t},\va{w}_{t})
+ \coutfin^{i}(\va{x}^{i}_{T})}
                 \eqfinv \\
s.t. \quad &  \va{x}^{i}_{t+1} = \dynamics^{i}_{t}(\va{x}^{i}_{t},\va{u}^{i}_{t},\va{w}_{t})
  \eqfinv \va{x}^{i}_{0} \quad \text{given} \label{pb:subproblem-dyn} \\
& \va{u}^{i}_{t} \;\;\: \preceq \sigma(\va{w}_{0},\ldots,\va{w}_{t})
  \label{pb:subproblem-mes} \eqfinv
\end{align}
whose solution is denoted $\bp{\va{u}^{i,(k)},\va{x}^{i,(k)}}$.
\end{subequations}
\item Use a gradient step to update the multipliers $\va{\Lambda}_{t}$:
\begin{equation}
\label{eq:multiplier-update}
\va{\Lambda}_{t}^{(k+1)} = \va{\Lambda}_{t}^{(k)} + \rho_t
\bgp{\sum_{i=1}^{N}\production_{t}^{i}\bp{\va{x}_t^{i,(k)},\va{u}_t^{i,(k)},\va{w}_{t}}}
\eqfinp
\end{equation}
\end{enumerate}

Consider the resolution of Subproblem~\eqref{pb:subproblem}. This subproblem
only involves the ``physical'' state variable~$\va{x}^{i}_{t}$ and the control variable~$\va{u}^{i}_{t}$, a situation which seems favorable to DP. It also
involves two exogeneous random processes, namely $\va{w}$ and $\va{\Lambda}^{(k)}$.
The white noise Assumption~\ref{ass:white} applies for the first process~$\va{w}$,
but not for the second one~$\va{\Lambda}^{(k)}$, so that the state of the system
cannot be summarized
by the physical state~$\va{x}^{i}_{t}$~! Moreover if we just use the fact
that~$\va{\Lambda}^{(k)}_{t}$ is measurable with respect to the past
noises, the state of the system must incorporate all noises prior to
time~$t$, that is, $(\va{w}_{0},\ldots,\va{w}_{t})$. The state size
of the subproblem increases with time. Without some additional knowledge
on the process~$\va{\Lambda}^{(k)}$, DP cannot be
applied in a straightforward manner: something has to be compressed
in order to use Dynamic Programming.


\section{Dual Approximate Dynamic Programming\label{sec:DADP}}

For a very specific instance of Problem~\eqref{pb:compact}, \cite{TheseStrugarek}
exhibited the dynamics of the optimal multiplier. Hence it was possible to come down
to the Markovian framework and to use DP to solve the subproblems~\eqref{pb:subproblem}
with an augmented space, namely the ``physical" state~$\va{x}^{i}_{t}$ and the state
associated to the mutiplier's dynamics. Following this idea for a general
Problem~\eqref{pb:compact}, \cite{BartyCarpentierGirardeau09} proposed to choose
a parameterized dynamics for the multiplier: then solving the subproblems using DP
becomes possible, the parameters defining the multiplier dynamics being updated
 at each iteration of the Uzawa algorithm. This new approach, called Dual Approximate
Dynamic Programming~(DADP), has then improved through a series of PhD theses
(\cite{TheseGirardeau}, \cite{TheseAlais} and \cite{TheseLeclere}) both from
the theoretical and from the practical point of view. We give here a brief overview
of the current DADP method.

\subsection{DADP core idea and associated algorithm\label{sec:DADP-core}}

In order to overcome the obstacle explained at~\S\ref{sec:frame-coupling}
concerning the random variables~$\va{\Lambda}_{t}^{(k)}$, we choose
a random variable~$\va{y}_{t}$ at each time~$t$,\footnote{Note that
the information variables~$\va{y}_{t}$ may depend on the subproblem
index~$i$: see~\cite{TheseGirardeau} for further details.}
each~$\va{y}_{t}$ being measurable with respect to the noises up
to time~$t$ $\bp{\va{w}_{0},\ldots,\va{w}_{t}}$.
We call $\va{y}=\bp{\va{y}_{0},\ldots,\va{y}_{T-1}}$
the \emph{information process} associated to Problem~\eqref{pb:compact}.

\subsubsection{Method foundation\label{sec:DADP-core-foundation}}

The core idea of DADP is to replace the multiplier $\va{\Lambda}_{t}^{(k)}$
by its conditional expectation $\nespc{\va{\Lambda}_{t}^{(k)}}{\va{y}_{t}}$
with respect to~$\va{y}_{t}$.
From an intuitive point of view, the resulting optimization problem will be
a good approximation of the original one if~$\va{y}_{t}$ is close
to the random variable $\va{\Lambda}_{t}^{(k)}$. Note that we require that
the information process is not influenced by controls because introducing
a dependency of the conditioning term with respect to the control would
lead to very serious difficulties for optimization.

Using this core idea, we replace Subproblem~\eqref{pb:subproblem} by:
\begin{subequations}
\label{pb:subproblem-approx}

\begin{align}
\label{pb:subproblem-cou-approx}
\min_{\va{x}^{i},\va{u}^{i}} \qquad
& \bgesp{\sum_{t=0}^{T-1}
  \Bp{\coutint^{i}_{t}(\va{x}^{i}_{t},\va{u}^{i}_{t},\va{w}_{t})
  + \coutfin^{i}(\va{x}^{i}_{T})}
  \nonumber \\
& \qquad\qquad\qquad\qquad
  + \nespc{\va{\Lambda}_{t}^{(k)}}{\va{y}_{t}} \cdot
  \production^{i}_{t}(\va{x}^{i}_{t},\va{u}^{i}_{t},\va{w}_{t})}
  \eqfinv \\
\text{s.t.} \quad
& \va{x}^{i}_{t+1} = \dynamics^{i}_{t}(\va{x}^{i}_{t},\va{u}^{i}_{t},\va{w}_{t})
  \eqsepv \quad \va{x}^{i}_{0} \quad \text{given}
  \eqfinv \label{pb:subproblem-dyn-approx} \\
& \va{u}^{i}_{t} \;\;\: \preceq \sigma(\va{w}_{0},\ldots,\va{w}_{t})
  \label{pb:subproblem-mes-approx} \eqfinp
\end{align}
\end{subequations}
According to the Doob property \cite[Chapter~1, p.~18]{DellacherieMeyer},
the~$\va{y}_{t}$-measurable random variable~$\nespc{\va{\Lambda}_{t}^{(k)}}{\va{y}_{t}}$
can be represented by a measurable mapping~$\mu_{t}^{(k)}$, that is,
\begin{equation}
\mu_{t}^{(k)}(\va{y}_{t}) = \bespc{\va{\Lambda}_{t}^{(k)}}{\va{y}_{t}} \eqfinv
\label{eq:Doob}
\end{equation}
so that Subproblem~\eqref{pb:subproblem-approx} in fact involves the two
fixed random processes $\va{w}$ and $\va{y}$. If the process~$\va{y}$
follows a non-controlled Markovian dynamics driven by the noise process~$\va{w}$, i.e. if there exists functions $h_t$ such that
\begin{equation}
\va{y}_{t+1} = h_{t} (\va{y}_{t},\va{w}_{t}) \eqfinv
\label{eq:dyn-info}
\end{equation}
then~$(\va{x}^{i}_{t},\va{y}_{t})$ is a valid state for the subproblem
and DP applies.

\subsubsection{DADP algorithm\label{sec:DADP-core-algorithm}}

Assume that the information process~$\va{y}$ follows the dynamics~\eqref{eq:dyn-info}.
\begin{itemize}
\item The first step of the DADP algorithm at iteration~$k$ consists in solving
the subproblems~\eqref{pb:subproblem-approx} with~$\va{\Lambda}_{t}^{(k)}$ fixed,
that is, with~$\mu_{t}^{(k)}(\cdot)$ given. It is done by solving the Bellman
functions associated to each subproblem~$i$, that is,
\begin{align*}
V_{T}^{i}(x^{i},y) =
& \; \coutfin^{i}(x) \eqfinv \\
V_{t}^{i}(x^{i},y) =
& \; \besp{Q_t^k (x^i,y,\va w_t)}
\end{align*}
where $Q_t^k (x^i,y,w_t)$ is the value of
\begin{align*}
\min_{u^{i}} \quad
& \coutint^{i}_{t}(x^{i},u^{i},{w}_{t})
  + \mu_{t}^{(k)}(y) \cdot \production^{i}_{t}(x^{i},u^{i},{w}_{t})
  + V_{t+1}^{i}\bp{{x}_{t+1}^{i},{y}_{t+1}^{i}} \\
\text{s.t.} \quad
& {x}_{t+1}^{i} = \dynamics^{i}_{t}(x^{i},u^{i},{w}_{t}) \eqfinv \\
& {y}_{t+1} = h_{t}(y,{w}_{t}) \eqfinp
\end{align*}

Storing the $\mathrm{argmin}$ obtained during the Bellman resolution,
we obtain the optimal feedback laws $\gamma_{t}^{i,(k)}$ as functions
of both the state $(x^{i},y)$ and the noise $w$ at time~$t$.
These functions allow to compute the optimal state and control
processes $\bp{\va{u}^{i,(k)},\va{x}^{i,(k)}}$ of subproblem~$i$
at iteration~$k$.\footnote{Remember that the process~$\va{y}$ follows
the non-controlled Markovian dynamics~\eqref{eq:dyn-info} and thus can be
obtained once for all.} Starting from $\va{x}_{0}^{i,(k)} = \va{x}_{0}^{i} $
the optimal control and state variables are obtained by applying
the optimal feedback laws from~$t=0$ up to~$T-1$:
\begin{align*}
\va{u}_{t}^{i,(k)} =
 & \; \gamma_{t}^{i,(k)}(\va{x}_{t}^{i,(k)},\va{y}_{t},\va{w}_{t}) \eqfinv \\
\va{x}_{t+1}^{i,(k)} =
 & \; f_{t}^{i}(\va{x}_{t}^{i,(k)},\va{u}_{t}^{i,(k)},\va{w}_{t}) \eqfinp
\end{align*}
\item The second step of the DADP algorithm consists in updating
the multiplier process $\va{\Lambda}^{(k)}$. Instead of updating
the multipliers themselves by the standard gradient formula\footnote{More
sophisticated formulas can be used in practice: see~\S\ref{sec:num-academic-convergence}.}

\begin{equation*}
\va{\Lambda}_{t}^{(k+1)} = \va{\Lambda}_{t}^{(k)} + \rho_t
\bgp{\sum_{i=1}^{N}\production_{t}^{i}\bp{\va{x}_t^{i,(k)},\va{u}_t^{i,(k)},\va{w}_{t}}}
\eqfinv
\end{equation*}

it is sufficient to deal with their conditional expectations w.r.t. $\va{y}_{t}$.
Using the optimal processes~$\va{x}^{i,(k)}$ and~$\va{u}^{i,(k)}$ obtained
at the previous step of the algorithm for all subproblems, the \emph{conditional}
deviation from the coupling constraint is obtained:
\begin{equation*}
\bgespc{\sum_{i=1}^{N}
        \production_{t}^{i}\bp{\va{x}_t^{i,(k)},\va{u}_t^{i,(k)},\va{w}_{t}}}
       {\va{y}_{t}} \eqfinp
\end{equation*}
By the Doob property, this conditional expectation can be represented
by a measurable mapping~$\Delta_{t}^{(k)}$:
\begin{equation}
\Delta_{t}^{(k)}({y}_{t}) =
\bgespc{\sum_{i=1}^{N}
        \production_{t}^{i}\bp{\va{x}_t^{i,(k)},\va{u}_t^{i,(k)},\va{w}_{t}}}
       {\va{y}_{t} = y_t} \eqfinp
\label{eq:Doob-bis}
\end{equation}
Gathering the functional representations~\eqref{eq:Doob} and~\eqref{eq:Doob-bis}
of the conditional multiplier and of the conditional deviation, the gradient
update reduces to the following functional expression:
\begin{equation}
\label{eq:fonct-multiplier-update}
\mu_{t}^{(k+1)}(\cdot) = \mu_{t}^{(k)}(\cdot) + \rho_t \Delta_{t}^{(k)}(\cdot) \eqfinp
\end{equation}
This last equation is equivalent to the multipliers conditional expectation update:
\begin{multline}
\label{eq:cond-multiplier-update}
\bespc{\va{\Lambda}_{t}^{(k+1)}}{\va{y}_{t}} =
\bespc{\va{\Lambda}_{t}^{(k)}}{\va{y}_{t}} \\
+ \rho_t \;
\Bespc{\sum_{i=1}^{N}\production_{t}^{i}\bp{\va{x}_t^{i,(k)},\va{u}_t^{i,(k)},\va{w}_{t}}}
      {\va{y}_{t}} \eqfinp
\end{multline}
\end{itemize}
DADP algorithm is depicted in Figure~\ref{fig:DADP}.

\tikzstyle{decision} = [diamond, draw, fill=blue!20,
    text width=7em, text badly centered, node distance=3cm, inner sep=0pt]
\tikzstyle{block} = [rectangle, draw, fill=blue!20,
    text width=6.8em, text centered, rounded corners, minimum height=4em]
\tikzstyle{line} = [draw, -latex']
\tikzstyle{cloud} = [draw, ellipse,fill=red!20, node distance=3cm,
    minimum height=2em]

\begin{figure}[ht]
\scalebox{0.75}{
\begin{tikzpicture}[node distance = 3cm, auto]
    \node [block] (init) {Multiplier {function $\mu^{(k)}_t(y)$}};
    \node [block, below of=init, node distance = 3.7cm] (dots) {$\cdots$};
    \node [block, left of=dots](sub-1){Solving subproblem $1$:\\
    DP on {($\va{x}_t^1,\va{y}_t$)}};
    \node [block, right of=dots](sub-N){Solving subproblem $N$:\\
    DP on {($\va{x}_t^N,\va{y}_t$)}};
    \node [block, below of= dots, node distance = 3.7cm, text width=5.5cm](test)
    {{$\displaystyle \underbrace{\bgespc{\sum_{i=1}^N \Theta^i_t\bp{\cdot}}
    {\va y_t = y}}_{\phantom{\va\Delta_t^{(k)}}\Delta_t^{(k)}
    {(y)}\phantom{\va\Delta_t^{(k)}}}=0$} ?};
    \node [cloud, left of= sub-1, text width=4.5cm, node distance = 5cm](update)
    {\hbox{{$\displaystyle \!\!\!\!\!\!{\mu_{t}^{(k+1)}(\cdot)} =
    {\mu_{t}^{(k)}(\cdot)}+\rho_t \Delta_t^{(k)}{(\cdot)}$}}};
    \path [line] (init) -- (dots);
    \path [line] (init) -- (sub-1);
    \path [line] (init) -- (sub-N);
    \path [line] (sub-1) -- node [left] {{$\Theta_t^i\bp{\va x_t^{i,(k)},\va u_t^{i,(k)},\va{w}_t}$}} (test);
    \path [line] (dots) -- (test) ;
    \path [line] (sub-N) -- (test) ;
    \path [line] (test) -| (update) ;
    \path [line] (update) |- (init);
     \node [block, above of=init, text width=4.5cm, fill=green!20, node distance = 2cm](Y)
     {Information Process {$\va y_{t+1}=h_t(\va y_t,\va w_{t})$}};
     \path [line]  (Y) -- (init);
\end{tikzpicture}
}
\caption{\label{fig:DADP} DADP flowchart.}
\end{figure}
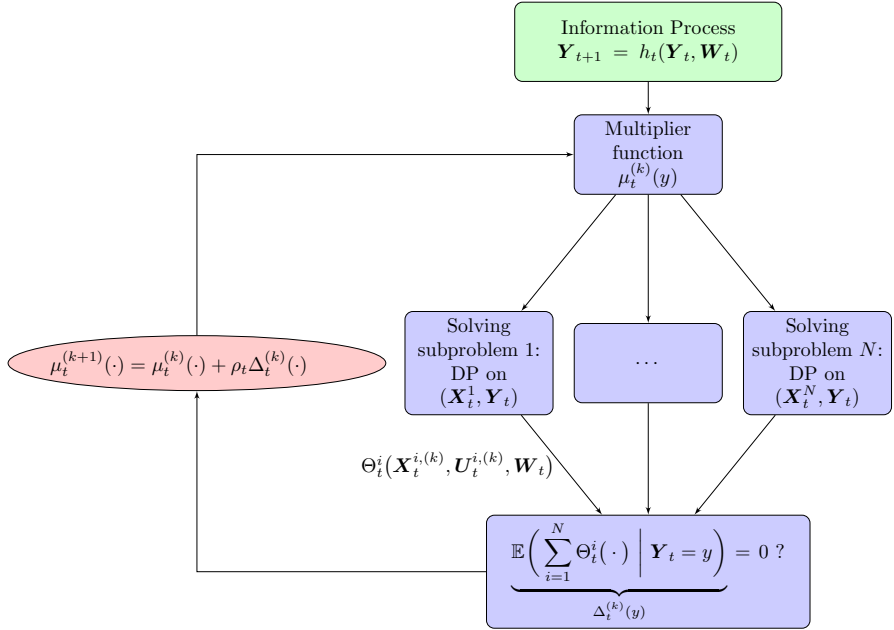

\subsection{DADP interpretations\label{sec:DADP-interpretation}}

The DADP method, as it has been presented up to now, consists in
an \emph{approximation} of the optimal multiplier, that is, the multiplier $\va{\Lambda}_{t}$
is replaced by its conditional expectation $\bespc{\va{\Lambda}_{t}}{\va{y}_{t}}$.
Such an approximation  is equivalent to a \emph{decision-rule} approach for
the dual problem, obtained by imposing measurability conditions to the dual
variables~$\va{\Lambda}_{t}$:
\begin{equation*}
\max_{\va{\Lambda}} \; \min_{\va{x},\va{u}} \;
  \mathcal{L}\bp{\va{x}, \va{u},\va{\Lambda}}
\quad \rightsquigarrow \quad
\max_{\va{\Lambda}_{t}\preceq \va{y}_{t}} \; \min_{\va{x},\va{u}} \;
  \mathcal{L}\bp{\va{x},\va{u},\va{\Lambda}} \eqfinp
\end{equation*}

DADP may also be viewed as a \emph{relaxation} of the constraints
in the primal problem. More precisely, we replace the almost sure coupling constraint~\eqref{pb:compact-cou} by a conditional expectation constraint, that is we
consider the following
relaxed version of the initial problem~\eqref{pb:compact}:
\begin{subequations}
\begin{equation}
\min_{(\va{x},\va{u})_{i\in\dc{1,N}}} \;
\bgesp{\sum_{i=1}^{N}
\Bp{\sum_{t=0}^{T-1}\coutint^{i}_{t}(\va{x}^{i}_{t},\va{u}^{i}_{t},\va{w}_{t})+
    \coutfin^{i}(\va{x}^{i}_{T})}} \eqfinv
\end{equation}
subject to the dynamics constraints~\eqref{pb:compact-dyn}, to the measurability
constraints~\eqref{pb:compact-mes} and to the \emph{conditional} coupling constraints:
\begin{equation}
\Bespc{\sum_{i=1}^{N}\production^{i}_{t}(\va{x}^{i}_{t},\va{u}^{i}_{t},\va{w}_{t})}
      {\va{y}_{t}} = 0 \eqfinp
\label{pb:SOCcond-cpl}
\end{equation}
\label{pb:SOCcond}
\end{subequations}

\begin{proposition}
Suppose the Lagrangian associated with Problem~\eqref{pb:SOCcond}
has a saddle point. Then the DADP algorithm can be interpreted as
the Uzawa algorithm applied to Problem~\eqref{pb:SOCcond}.
\end{proposition}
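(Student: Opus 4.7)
The plan is to write down the Lagrangian of the relaxed Problem~\eqref{pb:SOCcond}, show that its primal minimization step decomposes into exactly the DADP subproblems~\eqref{pb:subproblem-approx}, and that its dual gradient step is precisely the update~\eqref{eq:cond-multiplier-update}. The central algebraic ingredient will be the tower property of conditional expectation, which makes $\va{y}_t$-measurable multipliers indifferent to the replacement of the almost-sure coupling constraint by its conditional expectation with respect to $\va{y}_t$.

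First I would write down the Lagrangian of~\eqref{pb:SOCcond}. Since the relaxed constraint~\eqref{pb:SOCcond-cpl} is itself $\sigma(\va{y}_t)$-measurable, the associated Lagrange multiplier can be sought $\va{y}_t$-measurable without loss of generality, and by Doob's lemma it admits a functional representation $\mu_t(\va{y}_t)$. By the tower property,
\begin{equation*}
\EE\Bigl[\mu_t(\va{y}_t) \cdot \Bespc{\sum_{i=1}^N \production^i_t(\va{x}^i_t,\va{u}^i_t,\va{w}_t)}{\va{y}_t}\Bigr]
= \EE\Bigl[\mu_t(\va{y}_t) \cdot \sum_{i=1}^N \production^i_t(\va{x}^i_t,\va{u}^i_t,\va{w}_t)\Bigr] \eqfinp
\end{equation*}
Hence the Lagrangian of~\eqref{pb:SOCcond} coincides with the original Lagrangian $\mathcal{L}$ of Problem~\eqref{pb:compact} evaluated at any $\va{y}_t$-measurable multiplier.

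Next I would analyze one iteration of Uzawa on~\eqref{pb:SOCcond}. Given a current multiplier $\mu^{(k)}_t(\va{y}_t)$, the primal step minimizes $\mathcal{L}(\va{x},\va{u},\mu^{(k)}(\va{y}))$ in $(\va{x},\va{u})$; by additive separability in $i$ of both the costs and the coupling term $\mu^{(k)}_t(\va{y}_t) \cdot \production^i_t(\cdot)$, it decomposes into $N$ subproblems which are literally~\eqref{pb:subproblem-approx} under the identification $\mu^{(k)}_t(\va{y}_t) = \bespc{\va{\Lambda}^{(k)}_t}{\va{y}_t}$ provided by~\eqref{eq:Doob}. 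The dual step is a gradient ascent on the violation of~\eqref{pb:SOCcond-cpl}, i.e.
\begin{equation*}
\mu^{(k+1)}_t(\va{y}_t) = \mu^{(k)}_t(\va{y}_t) + \rho_t \Bespc{\sum_{i=1}^N \production^i_t\bp{\va{x}^{i,(k)}_t,\va{u}^{i,(k)}_t,\va{w}_t}}{\va{y}_t} \eqfinv
\end{equation*}
which under the same identification is exactly~\eqref{eq:cond-multiplier-update}, or equivalently~\eqref{eq:fonct-multiplier-update} written in functional form.

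The main subtlety lies not in any computation but in justifying the restriction of the dual variables of~\eqref{pb:SOCcond} to the $\va{y}_t$-measurable subspace (which is legitimate because the constraint itself lives there), in invoking Doob's lemma to pass between random-variable and functional representations, and in leaning on the saddle-point assumption to guarantee that the Uzawa scheme is well posed on the relaxed problem. Once these points are settled, the equivalence is read off by matching, term for term, the Uzawa iterates of~\eqref{pb:SOCcond} with the DADP iterates described in~\S\ref{sec:DADP-core-algorithm}.
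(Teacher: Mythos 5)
Your argument is correct and rests on the same key identity as the paper's own proof, namely the tower property rewriting $\besp{\nespc{\va{\Lambda}_{t}^{(k)}}{\va{y}_{t}}\cdot\production^{i}_{t}(\va{x}^{i}_{t},\va{u}^{i}_{t},\va{w}_{t})}$ as the dualization of the conditional coupling constraint~\eqref{pb:SOCcond-cpl}. You simply spell out more of the surrounding bookkeeping (the $\va{y}_{t}$-measurability of the multiplier, the Doob representation, and the term-by-term matching of the primal and dual Uzawa steps) than the paper does, which is a welcome but not substantively different elaboration.
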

\begin{proof}
Consider the duality term
$\besp{\nespc{\va{\Lambda}_{t}^{(k)}}{\va{y}_{t}}\cdot
              \production^{i}_{t}(\va{x}^{i}_{t},\va{u}^{i}_{t},\va{w}_{t})}$
which appears in the cost function of subproblem~$i$ in DADP. This term
can be written equivalently $\besp{\va{\Lambda}_{t}^{(k)}\cdot
      \nespc{\production^{i}_{t}(\va{x}^{i}_{t},\va{u}^{i}_{t},\va{w}_{t})}
             {\va{y}_{t}}}$.
which corresponds to the dualization of
the coupling constraint handled by Problem~\eqref{pb:SOCcond}.
\end{proof}

DADP thus consists in replacing an \emph{almost-sure}
constraint by its \emph{conditional expectation} w.r.t.
the information variable~$\va{y}_{t}$. From this interpretation,
we deduce that the optimal value provided by DADP is a guaranteed
\emph{lower bound} of the optimal value of Problem~\eqref{pb:compact}.

\subsection{Admissibility recovery\label{sec:DADP-admissibility}}

Instead of solving the original problem~\eqref{pb:compact}, DADP deals
with the relaxed problem~\eqref{pb:SOCcond} in which the almost-sure
coupling constraint~\eqref{pb:compact-cpl} is replaced by the less binding
constraint~\eqref{pb:SOCcond-cpl}. As a consequence, a solution
of Problem~\eqref{pb:SOCcond} does not satisfy the set of constraints
of Problem~\eqref{pb:compact}. An additional procedure has to be devised
in order to produce an (at last) admissible solution of~\eqref{pb:compact}.

Nevertheless, solving Problem~\eqref{pb:SOCcond} produces at each time~$t$
a set of~$N$ local Bellman functions~$V_{t}^{i}$, each depending on the
extended state~$(x_{t}^{i},y_{t})$. We use these functions to produce
a single Bellman function~$\widehat{V}_{t}$ depending on the global
state~$\bp{x_{t}^{1},\ldots,x_{t}^{N}}$ which is used as an
\emph{approximation} of the ``true'' Bellman function~$V_{t}$ of
Problem~\eqref{pb:compact}. The heuristic rule leading to this
approximated Bellman function simply consists in summing the local
Bellman functions:
\begin{equation*}
\widehat{V}_{t}\bp{x_{t}^{1},\ldots,x_{t}^{N},y_{t}} =
\sum_{i=1}^{N} V_{t}^{i} \bp{x_{t}^{i},y_{t}} \eqfinp
\end{equation*}
The approximated Bellman functions~$\widehat{V}_{t}$ allow us
to devise an admissible feedback policy for Problem~\eqref{pb:compact}:
for any value of the state~$\bp{x_{t}^{1},\ldots,x_{t}^{N}}$, any value
of the information~$y_{t}$ and any value of the noise~$w_{t}$ at time~$t$,
the control value is obtained by solving the following one-step DP problem
\begin{align*}
\min_{(u_{t}^{1},\ldots,u_{t}^{N})} \quad
& \; \sum_{i=1}^{N} L_{t}^{i}\bp{x_{t}^{i},u_{t}^{i},w_{t}^{i}} +
  \widehat{V}_{t+1}\bp{x_{t+1}^{1},\ldots,x_{t+1}^{N},y_{t+1}} \eqfinv \\
\text{s.t.} \quad
& x_{t+1}^{i} = f_{t}^{i} \bp{x_{t}^{i},u_{t}^{i},w_{t}^{i}} \eqsepv i\in\dc{1,N} \eqfinv \\
& y_{t+1} = h_{t} \bp{y_{t}^{i},w_{t}^{i}} \eqfinv \\
 & \sum_{i=1}^{N} \production^{i}_{t}(x^{i}_{t},u^{i}_{t},w_{t}) = 0 \eqfinp
\end{align*}
In this framework, DADP can be viewed as a tool allowing to compute approximated
Bellman functions for Problem~\eqref{pb:compact}. These functions are then used
to compute control values satisfying all the constraints of the problem, that is,
to produce an online admissible feedback policy for Problem~\eqref{pb:compact}.

Applying this online feedback policy along a bunch of noises scenarios allows
to compute a Monte Carlo approximation of the cost, which is accordingly
a stochastic \emph{upper bound} of the optimal value of Problem~\eqref{pb:compact}.

\subsection{Theoretical and practical questions\label{sec:DADP-questions}}

The theoretical questions linked to DADP are adressed in \cite{TheseLeclere},
and the practical ones in \cite{TheseGirardeau}and \cite{TheseAlais}.

\subsubsection{Theoretical questions\label{sec:DADP-questions-theoretical}}

In the DADP approach, we treat the coupling constraints of a stochastic
optimization problem by duality methods and solve it using the Uzawa
algorithm. Uzawa algorithm is a dual method which is usually described
in an Hilbert space such as $\mathrm{L}^{2}(\omeg,\trib,\prbt,\mathbb{R}^{n})$,
but we cannot guarantee the existence of an optimal multiplier in such a space.
To overcome the difficulty, the approach consists in extending the setting to
the non-reflexive Banach space $\mathrm{L}^{\infty}(\omeg,\trib,\prbt,\mathbb{R}^{n})$,
to give conditions for the existence of an optimal multiplier in
$\mathrm{L}^{1}\bp{\omeg,\trib,\prbt;\mathbb{R}^n}$ (rather than in the dual space
of $\mathrm{L}^{\infty}$) and to study the Uzawa algorithm convergence in this space.
The interested reader is referred to \cite{TheseLeclere} for more information.

\subsubsection{Practical questions\label{sec:DADP-questions-practical}}

An important practical question is the choice of the information variables $\va{y}_{t}$.
We present here some possibilities.
\begin{enumerate}
\item \emph{Perfect memory}:
$\va{y}_{t}=\bp{\va{w}_{0},\ldots,\va{w}_{t}}$. \\
From the measurability properties of~$\va{\Lambda}_{t}^{(k)}$, we have
$\nespc{\va{\Lambda}_{t}^{(k)}}{\va{y}_{t}} = \va{\Lambda}_{t}^{(k)}$,
that is, there is no approximation! A valid state for each subproblem is
then $\bp{\va{w}_{0},\ldots,\va{w}_{t}}$: the state is growing with time.
\item \emph{Minimal information}:
$\va{y}_{t} =0$.\footnote{or equivalently $\va{y}_{t}$ being
any \emph{constant} random variable} \\
Here $\va{\Lambda}_{t}^{(k)}$ is approximated by its expectation
$\nesp{\va{\Lambda}_{t}^{(k)}}$. The information variable does not deliver
any online information, and a valid state for subproblem ~$i$ is $\va{x}_{t}^{i}$.
\item \emph{Dynamic information}:
$\va{y}_{t+1}=h_{t}\bp{\va{y}_{t},\va{w}_{t+1}}$. \\
This choice corresponds to a number of possibilities, as mimicking the state
of another unit, or adding a hidden dynamics. A valid state for subproblem~$i$
is $\bp{\va{x}_{t}^{i},\va{y}_{t}}$.
\end{enumerate}

Finally, the question of accelerating the DADP algorithm by replacing
the standard Lagrangian by an augmented one, or by using more sophisticated
methods than the simple gradient ascent method in the multiplier
update step, have a great interest in order to improve the method.
This point is developed in~\S\ref{sec:num-academic-convergence}.


\section{Numerical experiments\label{sec:numerical}}

In this section, we present numerical results obtained on a large selection
of hydro valleys. Some of these valleys (see Figure~\ref{fig:academic})
correspond to academic examples, in the sense that their characteristics
(size of dams, range of controls, inflows values) do not rely on existing
valleys. These examples allow us to quantify the performance of different
optimization methods (DP, DADP and two different flavors of SDDP) on problems
of increasing size, from a valley incorporating 4 dams, and thus solvable
by DP, up to a valley with 30 dams, and thus facing the curse of dimensionality
(\S\ref{sec:num-academic} and \S\ref{sec:num-challenge}). We also present
two instances corresponding to more realistic hydro valleys, where
the models respect the orders of magnitude of the dam sizes of existing valleys
(\S\ref{sec:num-realistic}).

All the results presented here have been obtained using a 3.4GHz, 4~cores -- 8~threads
Intel$\circledR$ Xeon$\circledR$ E3 based computer.

\subsection{Application of DADP to a hydro valley\label{sec:num-hydro}}

We go back to the problem formulation~\eqref{pb:normal} presented
at \S\ref{sec:frame-problem}. In order to implement the DADP algorithm,
we dualize the coupling constraints
\begin{equation}
\label{eq:coupling-i-t}
\zipt - g_{t}^{i}(\xit,\uit,\wit,\zit) = 0 \; ,
\end{equation}
and we denote by $\va{\Lambda}_{t}^{i+1}$ the associated multiplier
(random variable).

When minimizing the dual problem at iteration~$k$ of the algorithm,
the product with a given multiplier by $\va{\Lambda}_{t}^{i+1,(k)}$
\begin{equation*}
\va{\Lambda}_{t}^{i+1,(k)} \cdot \bp{\zipt-g_{t}^{i}(\xit,\uit,\wit,\zit)} \; ,
\end{equation*}
is additive with respect to the dams, that is,
\begin{itemize}
\item the first term $\va{\Lambda}_{t}^{i+1,(k)} \cdot \zipt$
      pertains to dam~$i\!+\!1$ subproblem,
\item whereas the second term
      $\va{\Lambda}_{t}^{i+1,(k)} \cdot g_{t}^{i}\bp{\xit,\uit,\wit,\zit}$
      pertains to dam~$i$ subproblem,
\end{itemize}
hence leading to a dam by dam decomposition for the dual problem maximization
in~$(\va{x},\va{u},\va{z})$ at~$\va{\Lambda}_{t}^{i+1,(k)}$ fixed.
\begin{figure}[ht]
\begin{center}
  \includegraphics[width=0.5 \textwidth]{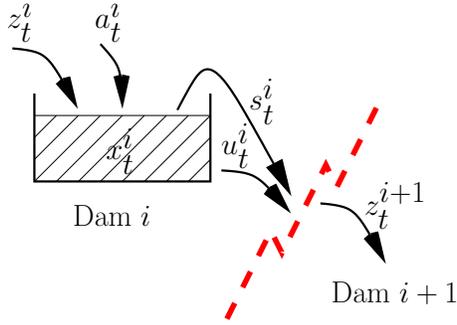}
\end{center}
\caption{\label{fig:Decomposition} Decomposition by dam.}
\end{figure}

\subsubsection{DADP implementation\label{sec:num-hydro-dadp}}

The DADP method consists in choosing a multiplier process $\va{y}$
and then replacing the coupling constraints by their conditional
expectations with respect to~$\va{y}_{t}$. Here we adopt the choice
$\va{y}_{t} = 0$
(minimal information), so that Constraints~\eqref{eq:coupling-i-t}
are replaced in the approximated problem by their expectations:
\begin{equation}
\label{eq:valley-constraint}
\besp{\zipt-g_{t}^{i}(\xit,\uit,\wit,\zit)} = 0 \eqfinp
\end{equation}
The expression of Subproblem~\eqref{pb:subproblem-approx}
attached to dam~$i$ reads
\begin{subequations}
\label{eq:valley-subproblem}

\begin{align}
\max_{\va{u}^{i},\va{z}^{i},\va{x}^{i}} \quad &
  \bgesp{ \sum_{t=0}^{T-1} \Bp{\coutint_{t}^{i}\bp{\xit,\uit,\wit,\zit}
  \; + \; \besp{\va{\Lambda}_{t}^{i,(k)}} \cdot \zit \\
  & \qquad - \; \besp{\va{\Lambda}_{t}^{i+1,(k)}}
  \cdot g_{t}^{i}\bp{\xit,\uit,\wit,\zit}} + \coutfin^{i}\bp{\xitf}} \eqfinv
\qquad\qquad \\
s.t. \qquad &  \va{x}^{i}_{t+1} = \dynamics^{i}_{t}(\va{x}^{i}_{t},\va{u}^{i}_{t},\va{w}_{t})
  \eqfinv  \qquad \va{x}^{i}_{0} \quad \text{given}\\
& \va{u}^{i}_{t} \;\;\: \preceq \sigma(\va{w}_{0},\ldots,\va{w}_{t}) \eqfinp
\end{align}
\end{subequations}
Because of the crude relaxation due to a constant~$\yit$, the multipliers $\va{\Lambda}_{t}^{i,(k)}$ appear only in the subproblems by means
of their expectations $\besp{\va{\Lambda}_{t}^{i,(k)}}$, so that all
subproblems involve a $1$-dimensional state variable, that is, the dam
stock~$\va{x}^{i}_{t}$, and hence are easily solvable by Dynamic Programming.

We denote by $\bp{\va{u}^{i,(k)},\va{z}^{i,(k)},\va{x}^{i,(k)}}$
the optimal solution of each subproblem~$i$, and by
$V_{t}^{i,(k)}(x^{i})$
 the Bellman function obtained for each dam~$i$ at time~$t$.

With the choice of constant information variables~$\yit$,
the coordination update step~\eqref{eq:cond-multiplier-update} reduces to
\begin{equation}
\label{eq:valley-update}
\besp{\va{\Lambda}_{t}^{i,(k+1)}} = \besp{\va{\Lambda}_{t}^{i,(k)}}
+ \rho_t \Besp{\va{z}^{i+1,(k)}_{t} - g_{t}^{i}\bp{\va{x}^{i,(k)}_{t},\va{u}^{i,(k)}_{t},\wit,\va{z}^{i,(k)}_{t}}}
\eqfinv
\end{equation}
that is, a collection of deterministic equations involving the expectation
of~\eqref{eq:coupling-i-t} which is easily computable by a Monte Carlo approach.

Assume that DADP converges, leading to optimal Bellman
functions~$V_{t}^{i,\infty}$ and to optimal solutions
$\bp{\va{u}^{i,\infty},\va{z}^{i,\infty},\va{x}^{i,\infty}}$.
We know that the initial almost-sure coupling constraints are not
satisfied. To recover admissibility, the heuristic rule
proposed at \S\ref{sec:DADP-admissibility} consists in forming
the global approximated Bellman $V_{t}^{\infty}$ functions as
\begin{equation*}
V_{t}^{\infty}\bp{x^{1},\ldots,x^{N}} =
\sum_{i=1}^{N} V_{t}^{i,\infty} \bp{x^{i}} \eqfinv
\end{equation*}
and then computing, at any time $t$ and for any pair $(x_t,w_t)$,
a control satisfying all the constraints of Problem~\eqref{pb:normal}
by solving the following one-step DP problem:
\begin{align*}
\max_{(u^{1},\ldots,u^{N})} \;\;
  & \sum_{i=1}^{N} L_{t}^{i}\bp{x^{i},u^{i},w_{t}^{i},z^{i}} +
    V_{t+1}^{\infty} \bp{x_{t+1}^{1},\ldots,x_{t+1}^{N}} \eqfinv \\
s.t. \quad\;\;
  & x_{t+1}^{i} = f_{t}^{i} \bp{x^{i},u^{i},w_{t}^{i},z^{i}} \quad \forall i \eqfinv \\
  & z^{i+1} = g_{t}^{i}(x^i,u^{i},w_{t}^{i},z^{i}) \quad \; \forall i \eqfinp
\end{align*}

\subsubsection{Complete process\label{sec:num-hydro-summary}}

To summarize, the whole process for implementing DADP is as follows.

\begin{itemize}
  \item \textbf{Optimization stage}
  \begin{itemize}
    \item Apply the DADP algorithm, and obtain at convergence the local Bellman
    functions~$V_{t}^{i,\infty}$.
    \item Form the approximated global Bellman functions~$V_{t}^{\infty}$.
  \end{itemize}
  \item \textbf{Simulation stage}
  \begin{itemize}
    \item Draw a large number of noise scenarios (Monte Carlo sampling).
    \item Compute the control values along each scenario by solving
    the one-step DP problems involving the Bellman
    functions~$V_{t}^{\infty}$'s, thus satisfying all the constraints
    of Problem~\eqref{pb:normal} as explained in~\S\ref{sec:DADP-admissibility};
    these computations produce for each scenario a
    state and control trajectories, as well as a payoff.
    \item Evaluate the quality of the solution: trajectories variability,
    payoff distribution and associated mean\ldots
  \end{itemize}
\end{itemize}

\subsection{SDDP implementations\label{sec:num-sddp}}

As will be explained in~\S\ref{sec:num-academic}, the controls
of the original problem are discrete, which is a priori a difficulty
for SDDP implemetation.

We describe here the two implementations of SDDP that we use for numerical
comparisons, that is, a classical version of SDDP in which the integrity constraints
on the control variables are relaxed (continuous controls) in order to use standard
quadratic programming, and a homemade discrete version of SDDP described
later on.

\subsubsection{Continuous version of SDDP\label{sec:sddpc}}

The continuous implementation \SDDPc\ of SDDP relaxes the integrity constraints
upon the control $\va u$, and assume that for all time $t$ and all dam~$i$
\begin{equation}
  x_t^i  \in [\underline{x}^i,\; \overline{x}^i] \quad\quad
  u_t^i  \in [\underline{u}^i,\; \overline{u}^i] \eqfinp
\end{equation}
Furthermore, \SDDPc\ considers that the spillage
is a control variable, so as to render the dynamic linear. As the dynamic
is linear and costs convex, we are in the standard framework of Stochastic
Dual Dynamic Programming and the resolution converges asymptotically
to the optimal solution of the relaxed problem \cite{Girardeau_MOR_2015}.

The whole process of \SDDPc\ is as follows.
\begin{itemize}
  \item \textbf{Optimization stage.}
    The implementation of \SDDPc\ corresponds to the one described
    in \cite{Shapiro10}, that is, the so-called DOASA implementation.
    Lower approximations of the Bellman functions $V_t$ are built
    iteratively. At iteration $k$, the procedure consists of two passes.
    \begin{itemize}
      \item During the \emph{forward pass}, we first sample a scenario of noise.
      We then simulate a stock trajectory by using the current approximation
      of the Bellman functions. This is done by successively solving one-step
      DP problem to determine the next stock value. Each of these one-step DP
      problems is in fact a continuous quadratic programming (QP) problem.
      \item In the \emph{backward pass}, duality theory allows to find subgradient of lower approximations of the Bellman functions. This subgradients are used to construct valid cut, that is hyperplanes that are lower that the Bellman functions. Those cuts are then added to the current approximations of Bellman functions.
    \end{itemize}
  \item \textbf{Simulation stage.}
    The simulation stage is identical to the one described
    at \S\ref{sec:num-hydro-summary}: the controls are
    computed with a one-step DP problem using the approximation
    of the Bellman functions obtained by \SDDPc.
\end{itemize}

The continuous version \SDDPc\ is implemented in Julia, with the package
StochDynamicProgramming\footnote{See the github link
\url{https://github.com/JuliaOpt/StochDynamicProgramming.jl}.} built
on top of the JuMP package
used as a modeller \cite{DunningHuchetteLubin2015}. The QP problems are solved
using CPLEX 12.5. Every 5 iterations, redundant cuts are removed thanks to
the limited memory level-1 heuristic
described in \cite{guigues2017dual}. Indeed, without cuts removal, the resolution
of each QP becomes too slow as the number of cuts increase along iterations.
The algorithm is stopped after a fixed number of iterations and the gap is
 estimated with Monte-Carlo as described in \cite{Shapiro10}.

\subsubsection{Discrete version of SDDP\label{sec:sddpd}}

In the discrete version of \SDDPd, the controls $\va u$ are discrete variables
which have the same constraints as in the original problem formulation.
Some other works mixing SDDP and binary variables are described in~\cite{zou2016nested}
The whole process of \SDDPd\ is as follows.
\begin{itemize}
  \item \textbf{Optimization stage.}
    The implementation of \SDDPd\ corresponds to the one described
    for  \SDDPc\ that is, approximations of the Bellman functions $V_t$
    by a set of cuts which are built iteratively. At iteration $k$,
    the procedure consists of two passes.
    \begin{itemize}
      \item During the \emph{forward pass}, we first sample a scenario of noise.
      We then simulate a stock trajectory by using the current approximation
      of the Bellman functions. This is done by successively solving one-step DP
      problem to determine the next stock. Each of these one-step DP problems
      is solved by enumerating all possible values of the discrete control.
      In order to take advantage of the threading facilities, we may treat
      multiple noise scenarios in parallel (8 scenarios in practice).
      \item In the \emph{backward pass}, the approximations of the Bellman
      functions are refined along the trajectories computed during the forward
      pass. For each new added state point we first solve a one-step DP problem
      to obtain a new Bellman value. Then we solve a family of one step DP problem
      using as state points the neighbors on the grid of the current state point.
      We are thus able to approximate a new cut at the current state point by
      approximating derivatives of the Bellman function by finite differences
      (we use centered finite differences for regular states and forward,
      backward differences for states at domain boundary).
    \end{itemize}
  \item \textbf{Simulation stage.}
    The simulation stage is identical to the one described
    at \S\ref{sec:num-hydro-summary}: the controls are
    computed with a one-step DP problem using the approximation
    of the Bellman functions obtained by \SDDPd.
\end{itemize}
The algorithm is stopped after a fixed number of iterations (25 in our experiments).
The number of added points at forward step as already explained was set to 8.
During the iterations some cuts were dropped by only keeping the last 100 cuts
obtained for each Bellman function.

\subsection{Results obtained for academic valleys\label{sec:num-academic}}

We model a first collection of hydro valleys including from 4 to 12 dams,
with arborescent geometries (see Figure~\ref{fig:academic}).

\begin{figure}[ht!]
\begin{center}
  \includegraphics[scale=0.225]{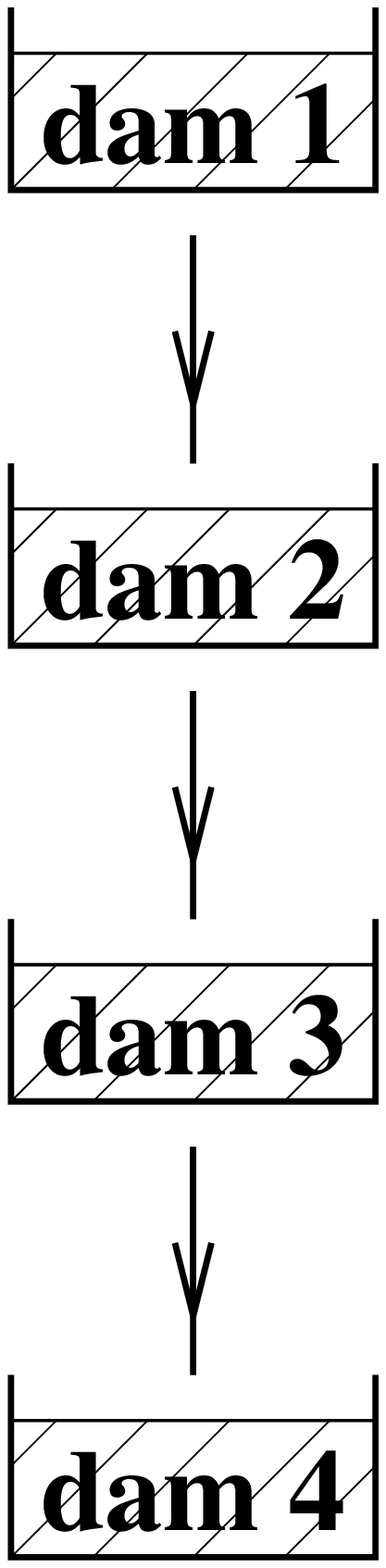} \hspace{0.1cm} $\;$
  \includegraphics[scale=0.225]{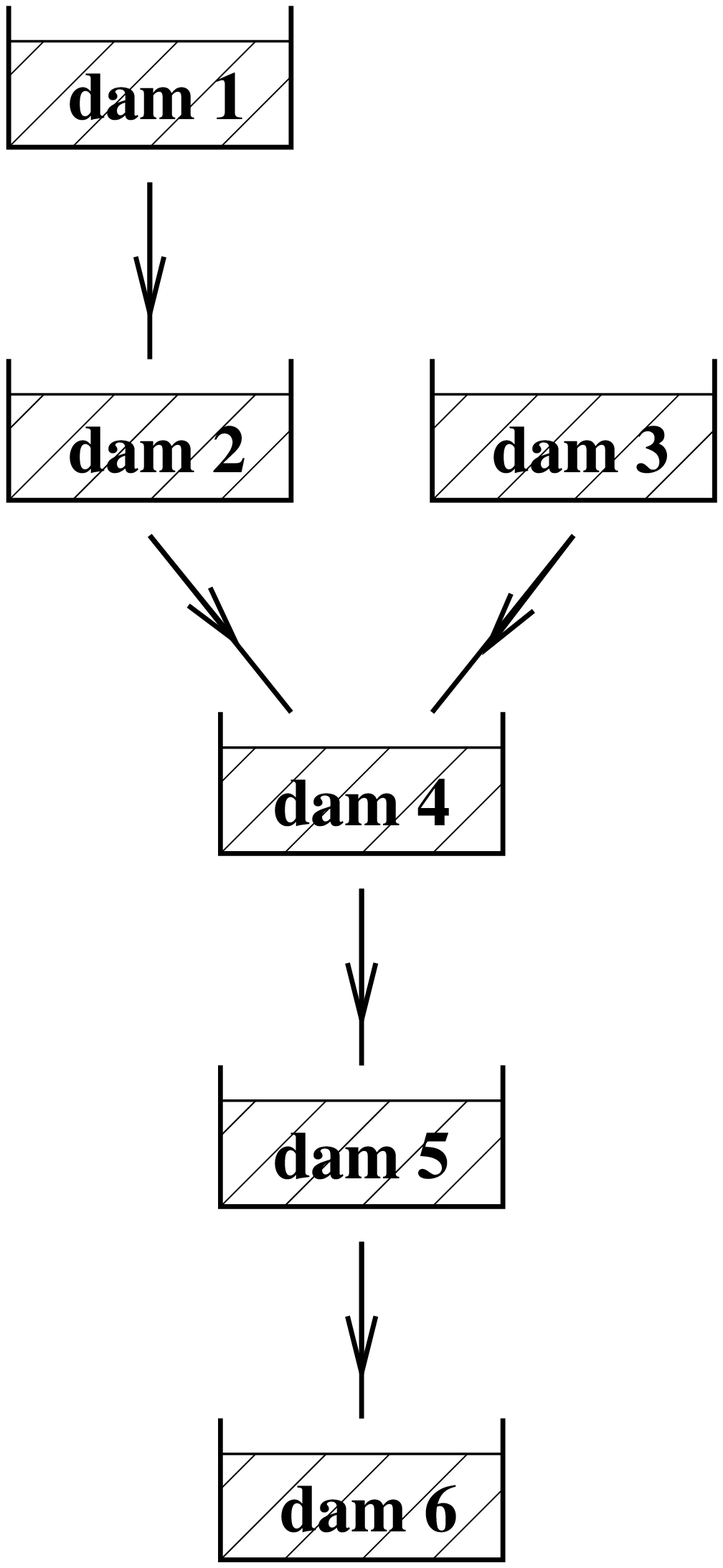} \hspace{0.1cm} $\;$
  \includegraphics[scale=0.225]{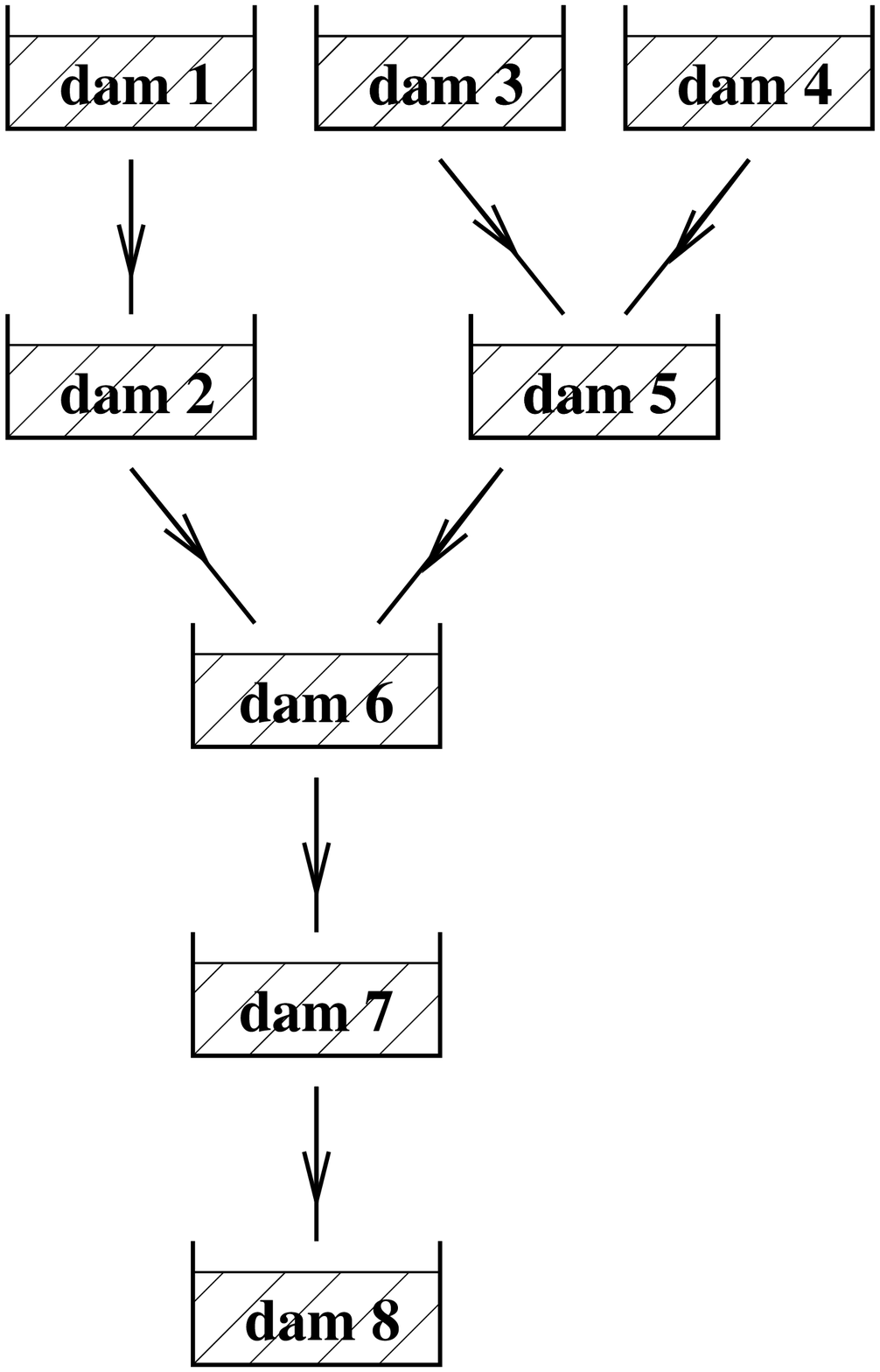} \hspace{0.1cm} $\;$
  \includegraphics[scale=0.225]{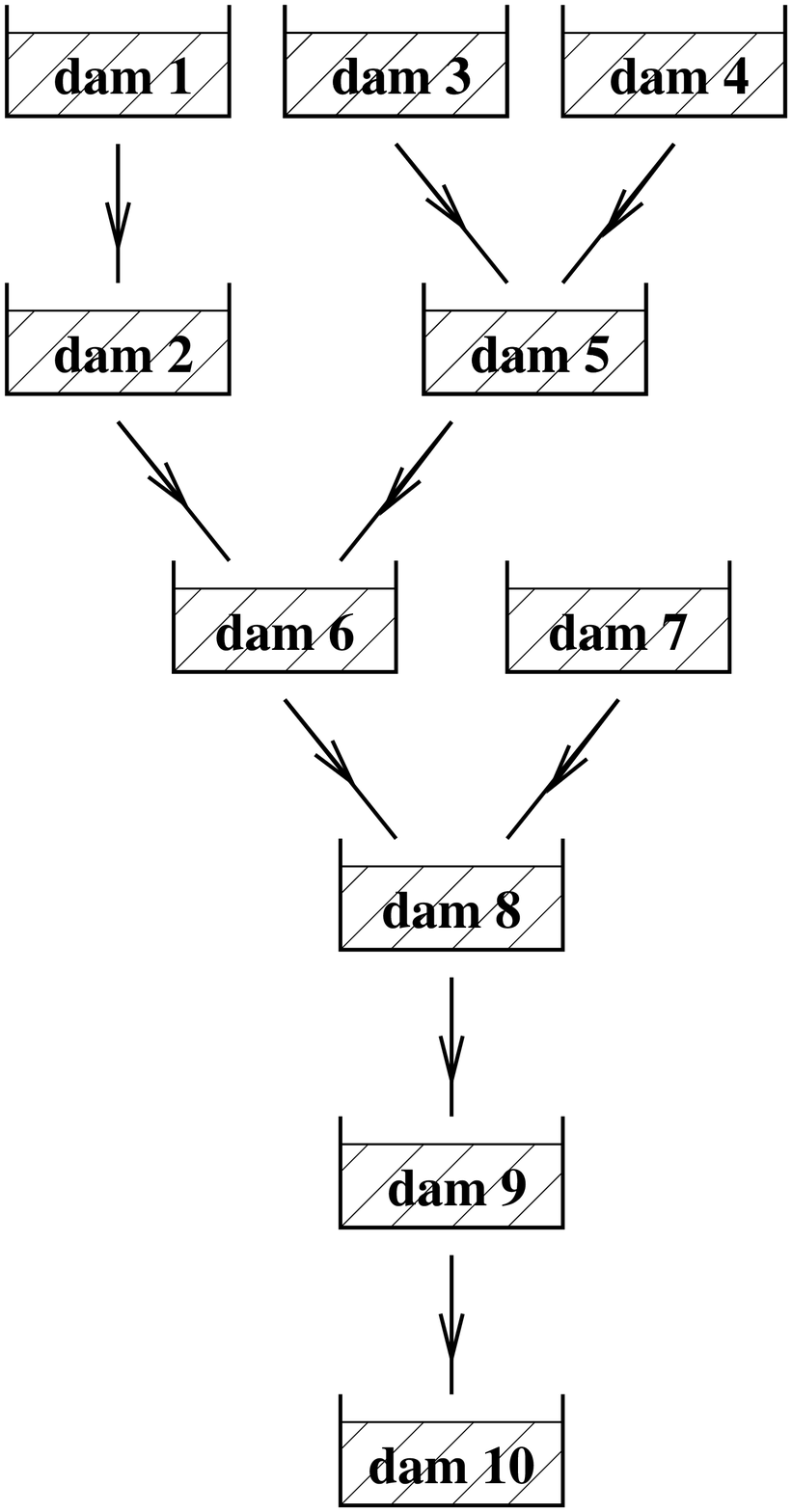}
\end{center}
\vspace{-0.2cm}
\hspace{0.2cm} 4-Dams \hspace{0.7cm} 6-Dams \hspace{1.4cm} 8-Dams \hspace{3.0cm} 10-Dams

\caption{\label{fig:academic} Some academic examples of hydro valleys.}
\end{figure}

The optimization problem is stated on a time horizon of one year, with
a monthly time step ($T=12$). All the dams have more or less the same maximal
volume. The maximal amount of turbinated water for each dam varies with
the location of the dam in the valley (more capacity for a downstream dam than
for an upstream dam), as well as the random inflows in a dam (more inflow for
an upstream dam than for a downstream dam). We assume discrete probability laws
with finite supports for the inflows,\footnote{Market prices are assumed to be
deterministic.} and we also assume that the available turbine controls are discrete,
so that each dam is in fact modeled using a discrete Markov chain.
These valleys do not correspond to realistic valleys, in the sense that
a true valley incorporates dams with very heterogeneous sizes.

\subsubsection{DADP convergence\label{sec:num-academic-convergence}}

Let us first detail the gradient method used for the update of the multipliers
involved by \DADP. Thanks to the choice of constant information variables,
the gradient expression involved in the update formula~\eqref{eq:valley-update}
is an expectation:
\begin{equation*}
\Besp{\va{z}^{i+1,(k)}_{t} -
      g_{t}^{i}\bp{\va{x}^{i,(k)}_{t},\va{u}^{i,(k)}_{t},\wit,\va{z}^{i,(k)}_{t}}}
\eqfinp
\end{equation*}
This expectation can be approximated by a Monte Carlo approach.
We draw a collection of scenarios\footnote{Note that this
collection of scenarios has nothing to do with the one used
during the simulation stage of the complete process described
at \S\ref{sec:num-hydro-summary}.} of~$\na{\va{w}_{t}}$
and then compute at iteration $k$ of DADP the optimal solutions
$\ba{\va{x}^{i,(k)}_{t},\va{u}^{i,(k)}_{t},\va{z}^{i,(k)}_{t}}$
of Subproblem~\eqref{eq:valley-subproblem} along each scenario.
We thus obtain realizations of $\bp{\va{z}^{i+1,(k)}_{t} -
g_{t}^{i}\np{\va{x}^{i,(k)}_{t},\va{u}^{i,(k)}_{t},\wit,\va{z}^{i,(k)}_{t}}}$,
whose arithmetic mean gives the (approximated) gradient component
at time $t$ for the coupling at dam $i+1$ . This gradient can
be used either in the standard steepest descent method such as
in~\eqref{eq:valley-update}, or in a more sophisticated algorithm such as
the conjugate gradient or the quasi-Newton method. We use in our numerical
experiments a solver (limited memory BFGS) of the MODULOPT
library from INRIA \cite{gilbert2007libopt}.
For all the valleys we studied, the convergence was fast (around 100 iterations
regardless of the problem size). Figure~\ref{fig:dam4-multipliers} represents
the evolution of the multipliers of dam connections for the 4-Dams valley
along the iterations of the algorithm.

\begin{figure}[ht!]
\begin{center}
\includegraphics[scale=0.275]{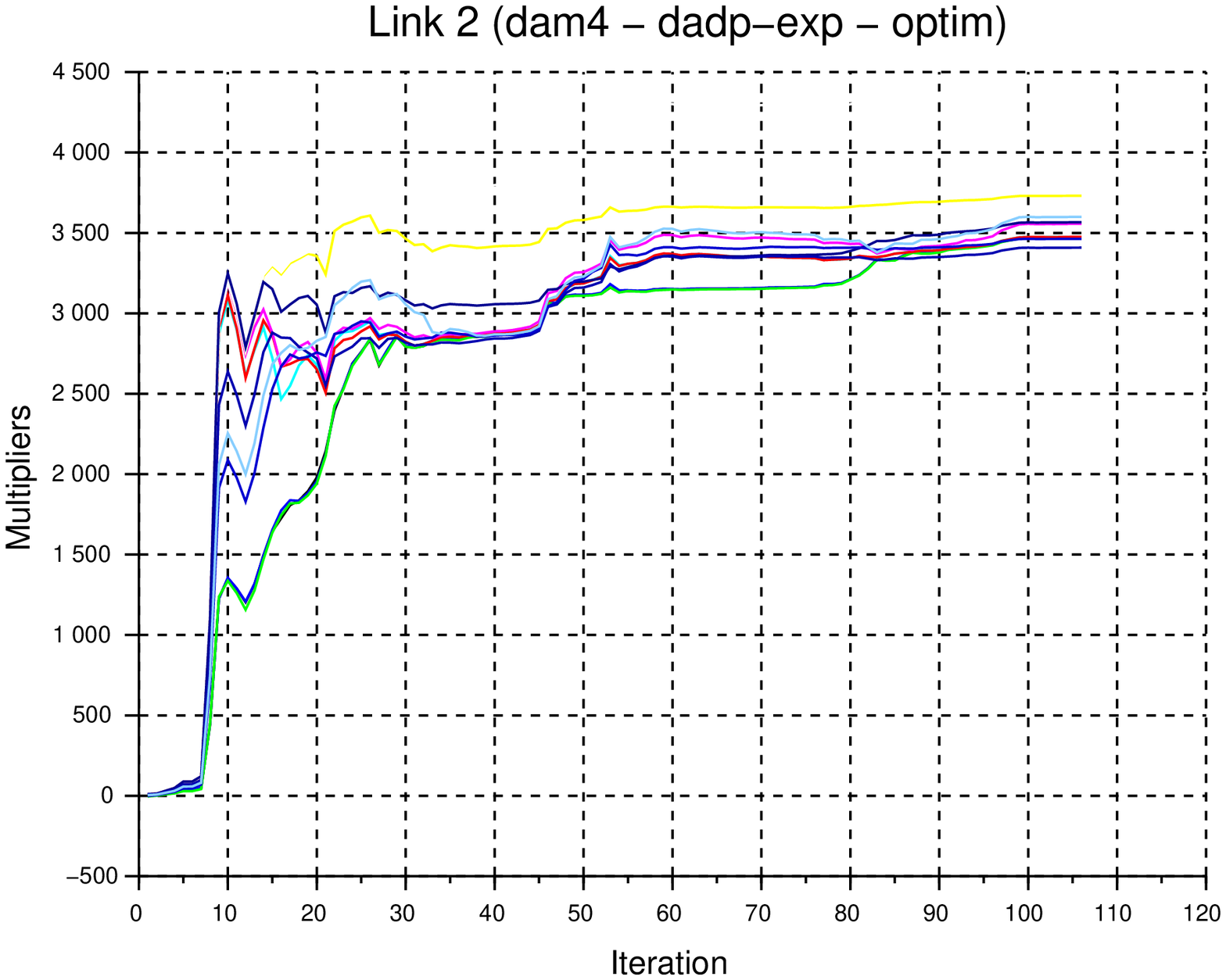}
$\!\!\!\!\!\!\!\!\!\!\!\!\!\!\!$
\includegraphics[scale=0.275]{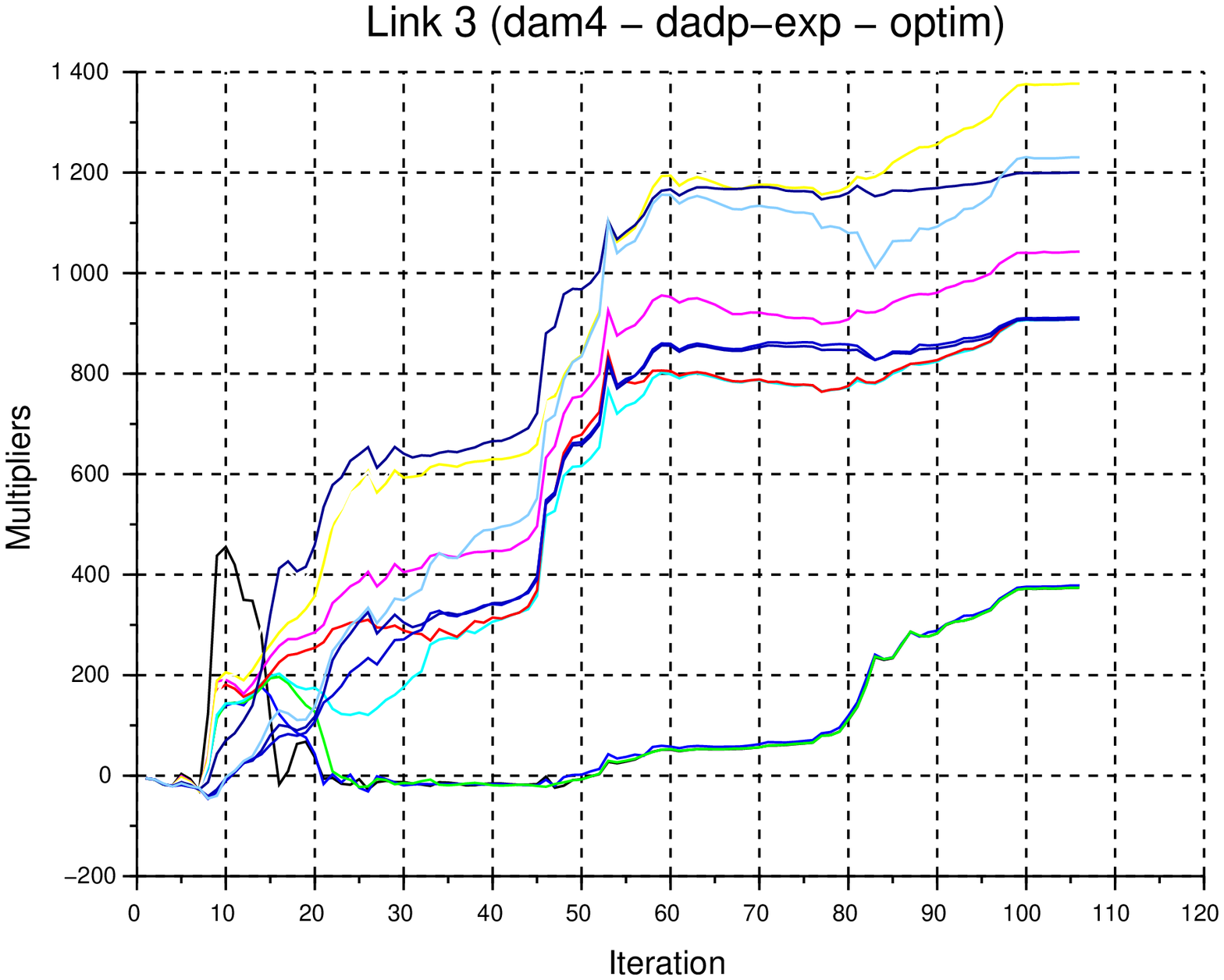}
\end{center}
\caption{4-Dams multipliers: \texttt{dam}1$\leftrightarrow$\texttt{dam}2 (left) ~---~
                                    \texttt{dam}2$\leftrightarrow$\texttt{dam}3 (right)}
\label{fig:dam4-multipliers}
\end{figure}

\subsubsection{Methods comparison\label{sec:num-academic-comparison}}

We solve Problem~\eqref{pb:normal} for the first collection of academic
valleys by 4 different methods:
\begin{enumerate}
\item the standard Dynamic Programming method (\DP), if possible,
\item the continuous version of SDDP (\SDDPc) presented at~\S\ref{sec:sddpc},
\item the discrete version of SDDP (\SDDPd) presented at~\S\ref{sec:sddpd},
\item the DADP method (\DADP).
\end{enumerate}
All these methods produce Bellman functions (optimization stage
described at~\S\ref{sec:num-hydro-summary}), whose quality is evaluated
by the simulation stage of~\S\ref{sec:num-hydro-summary}.
The obtained results are given in Table~\ref{tab:academic}. The lines
``CPU time'' correspond to the time (in minute) needed to compute
the Bellman functions (optimization stage), whereas the lines ``value''
indicate the cost obtained by Monte Carlo on the initial model
(simulation stage, performed using a 100,000 scenarios sample,
except for the 12-Dams valley for which a smaller sample set was used
for computational time constraint).
The comparisons between the different cost values for the same valley
are thus relevant.
For the two methods \SDDPc\ and \DADP, we also give
the upper bound corresponding to the Bellman value obtained at the end
of the optimization stage.

\begin{table}[ht!]
\begin{center}
{\scriptsize
\begin{tabular}{|l|c|c|c|c|c|}
  \hline
  Valley              & \hesp \textbf{4-Dams} \hesp & \hesp \textbf{6-Dams} \hesp  & \hesp \textbf{8-Dams} \hesp & \hesp \textbf{10-Dams} \hesp & \hesp \textbf{12-Dams} \hesp \\
  \hline
  \hline
  \DP\ CPU time       & \bleue{$\mathit{1600}$'}    & \bleue{$\sim\mathit{10^8}$'} & \bleue{$\sim\infty$}        & \bleue{$\sim\infty$}         & \bleue{$\sim\infty$}         \\
  \hline
  \DP\ value          & $3743$                      & \rouge{N.A.}                 & \rouge{N.A.}                & \rouge{N.A.}                 & \rouge{N.A.}                 \\
  \hline
  \hline
  \SDDPd\ CPU time    & \bleue{$\mathit{2}$'}       & \bleue{$\mathit{320}$'}      & \bleue{$\mathit{2250}$'}    & \bleue{$\mathit{133500}$'}   & \bleue{$\sim\infty$}         \\
  \hline
  \SDDPd\ value       & $3737$                      & $7011$                       & $11750$                     & $16920$                      & \rouge{N.A.}                 \\
  \hline
  \hline
  \SDDPc\ CPU time    & \bleue{$\mathit{6}$'}       & \bleue{$\mathit{10}$'}       & \bleue{$\mathit{13}$'}      & \bleue{$\mathit{50}$'}       & \bleue{$\mathit{97}$'}       \\
  \hline
  \SDDPc\ value       & $3742$                      & $7027$                       & $11830$                     & $17070$                      & $\sim17000$                  \\
  \hline
  \SDDPc\ upper bound & $\mathit{3754}$             & $\mathit{7050}$              & $\mathit{11960}$            & $\mathit{17260}$             & $\mathit{19490}$             \\
  \hline
  \hline
  \DADP\ CPU time     & \bleue{$\mathit{7}$'}       & \bleue{$\mathit{12}$'}       & \bleue{$\mathit{18}$'}      & \bleue{$\mathit{24}$'}       & \bleue{$\mathit{22}$'}       \\
  \hline
  \DADP\ value        & $3667$                      & $6816$                       & $11570$                     & $16760$                      & $\sim17000$                  \\
  \hline
  \DADP\ dual value   & $\mathit{3996}$             & $\mathit{7522}$              & $\mathit{12450}$            & $\mathit{17930}$             & $\mathit{20480}$             \\
  \hline
  \hline
  Gap \DADP/\SDDPc    & \rouge{$\mathbf{-2.0}$\%}   & \rouge{$\mathbf{-3.0}$\%}    & \rouge{$\mathbf{-2.2}$\%}   & \rouge{$\mathbf{-1.8}$\%}    & \rouge{$?$}                  \\
  \hline
\end{tabular}
}
\end{center}
\caption{\label{tab:academic}Results obtained by \DP, \SDDPd, \SDDPc\ and \DADP}
\end{table}

We first note that a direct use of \DP\ is only possible for the 4-Dams valley:
it corresponds to the well-known curse of dimensionality inherent to \DP.
The value given by \DP\ is the true optimal cost value for the 4-Dams valley
and can be used as the reference value.
If we go on to the \SDDPd\ method, we observe that it gives rather good
results. The simulation cost obtained for the 4-Dams valley is very close
to the one obtained by \DP, which grounds the quality of the \SDDPd\ method.
But the method suffers from a curse of dimensionality associated to
the control, in the sense that each optimization problem inside \SDDPd\
has to enumerate all possible values of the discrete control, which becomes
prohibitive for large valleys (number of possible controls for a single
dam \emph{power} number of dams). This explains why the method fails on
the 12-Dams valley. Moreover, the \SDDPd\ method does not provide any
upper bound: the cuts are indeed computed by finite differences, so that
we cannot guarantee that the cuts approximation is a lower approximation
of the Bellman functions.
The \SDDPc\ method, although relying on the integrity constraints
relaxation in the optimization stage (hence a not so tight upper bound),
gives excellent results for the 4-Dams valley: we thus elect \SDDPc\ as
the reference method in order to evaluate the \DADP\ method. Note that
the CPU time remains reasonable, the optimization problems inside
\SDDPc\ corresponding to a continuous linear-quadratic formulation (here
solved using the CPLEX commercial solver).\footnote{Note however
that all the methods we are comparing face the curse of dimensionality
associated to the combinatorics of the control during the simulation stage,
as the controls associated to the whole valley have to be enumerated
at each time~$t$ along each scenario. This is the reason why the value
obtained for the 12-Dams valley have been computed using $1000$ scenarios
($100,000$ for the others valleys) and hence not so accurate.}

We now turn to the \DADP\ method. We first notice that the upper bound
given by the method is rather bad (as a consequence of solving a problem
with relaxed coupling constraints  in the optimization stage), but the values
obtained in the simulation stage are correct compared to the ones given
by \SDDPc\ (as indicated by the last line of Table~\ref{tab:academic}).
The most noticeable point is that the CPU time needed for the optimization
stage seems to grow more slowly for \DADP\ than for \SDDPc. This aspect
will be highlighted in \S\ref{sec:num-challenge}.

Let us finally materialize more finely the difference in the results between
\SDDPc\ and \DADP. Beyond average values given in Table~\ref{tab:academic}, Figure~\ref{fig:dam4-payoffs} represents the payoff empirical probability
laws (optimal cost over the time horizon), obtained by the simulation stage
using 100,000 scenarios, for both \SDDPc\ and \DADP. We observe that, although
the expectations are fairly close, the shapes of the two distributions differ
significantly.

\begin{figure}[ht!]
\begin{center}
\includegraphics[scale=0.3]{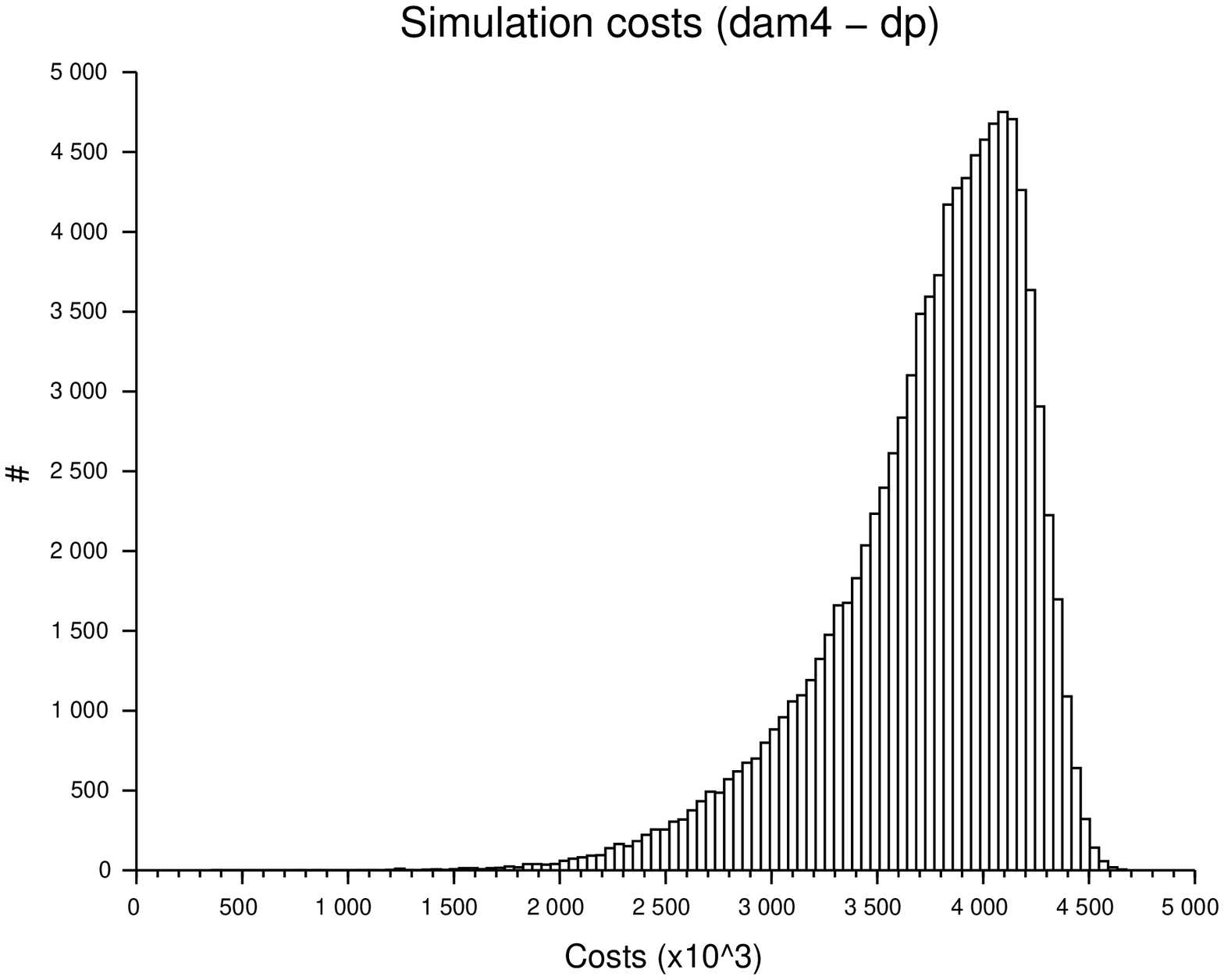}
$\!\!\!\!\!\!\!\!\!\!\!\!\!\!\!\!\!\!\!\!$
\includegraphics[scale=0.3]{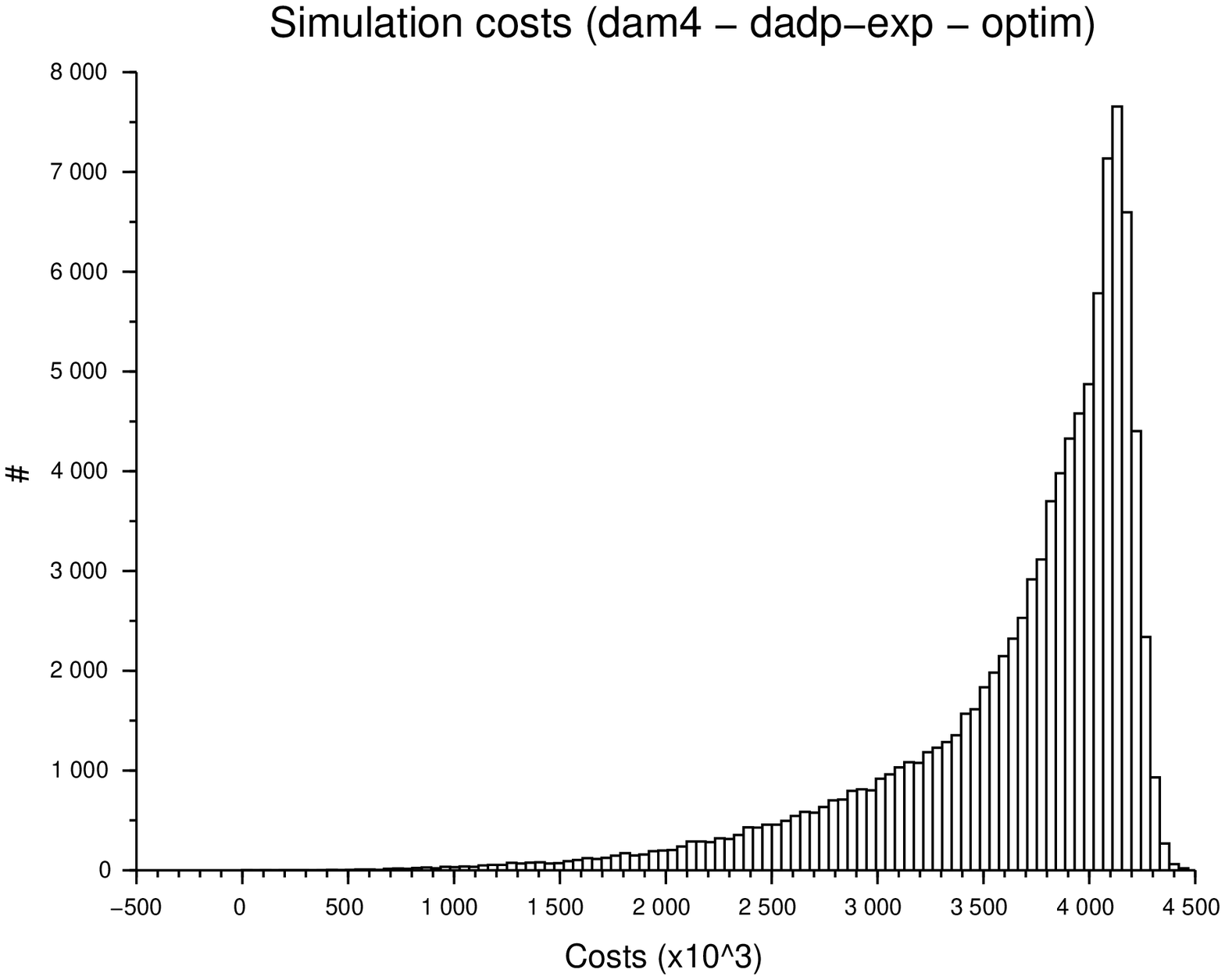}
\end{center}
\caption{4-Dams payoff distributions: \SDDPc\ (left) ~---~ \DADP\ (right)}
\label{fig:dam4-payoffs}
\end{figure}

\subsection{Challenging the curse of dimensionality\label{sec:num-challenge}}

The experiments made in~\S\ref{sec:num-academic} seem to indicate that
\DADP\ is less sensitive to the size of the valley than the \SDDPc\
method. In order to validate this observation, we design a new
collection of academic hydro valleys incorporating
from $14$ up to $30$ dams. It is of course no more possible to perform
the simulation stage for these instances: the combinatorics induced by
the set of possible values of the controls is too large to allow to
simulate the valley behavior along a large set of scenarios. We thus
limit ourselves to the computation of the Bellman functions (optimization
stage). The corresponding results are reported in Table~\ref{tab:curse}.

\begin{table}[ht!]
\begin{center}
{\footnotesize
\begin{tabular}{|l|c|c|c|c|c|}
  \hline
  Valley           & \textbf{14-Dams} & \textbf{18-Dams} & \textbf{20-Dams} & \textbf{25-Dams} & \textbf{30-Dams} \\
  \hline
  \hline
  \SDDPc\ CPU time & $\mathit{210}$'  & $\mathit{585}$'  & $\mathit{970}$'  & $\mathit{1560}$' & $\mathit{2750}$' \\
  \hline
  \DADP\ CPU time  & $\mathit{40}$'   & $\mathit{50}$'   & $\mathit{75}$'   & $\mathit{140}$'  & $\mathit{150}$'  \\
  \hline
\end{tabular}
}
\end{center}
\caption{\label{tab:curse}\SDDPc\ and \DADP\ comparison for large academic valleys}
\end{table}

It appears that the CPU time required for the \DADP\ method grows linearly
with the number of dams, while the growth rate of \SDDPc\ is more or less
exponential. Figure~\ref{fig:cputime} shows how the CPU time varies for
the four methods. As expected, \DP\ is only implementable for small instances,
say up to $5$ dams. \SDDPd\ allows to go a step further, but is limited
by the combinatorial nature of the discrete controls we are considering.
Finally, the limits of \SDDPc\ and \DADP\ have not really be reached,
but \DADP\ displays a linear rate allowing to tackle instances
of even greater size.

\begin{figure}[ht!]
\begin{center}
\includegraphics[scale=0.55]{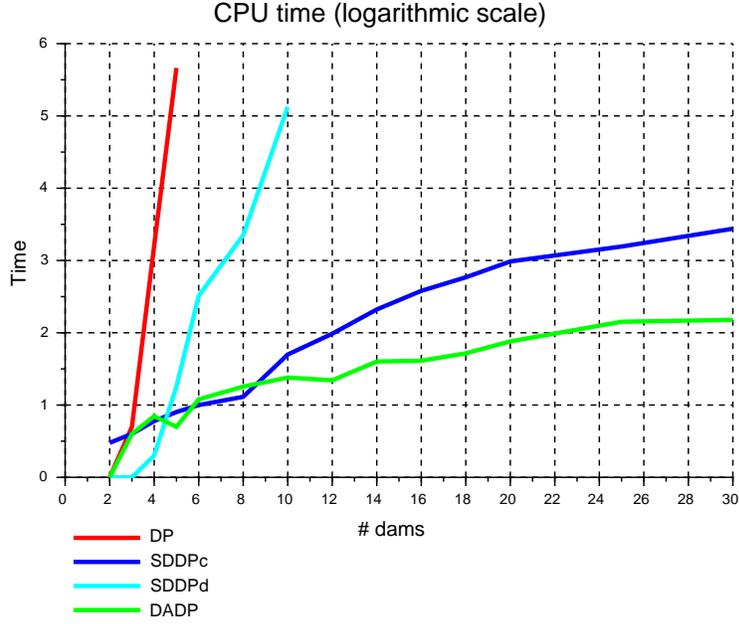}
\end{center}
\caption{CPU time comparison}
\label{fig:cputime}
\end{figure}

\subsection{Results for two realistic valleys\label{sec:num-realistic}}

We finally model two hydro valleys corresponding to existing systems
in France, namely the Vicdessos valley and the Dordogne river
(see Figure~\ref{fig:realistic}).

\begin{figure}[ht!]
\begin{center}
    \hspace{-1.0cm}
  \includegraphics[scale=0.35]{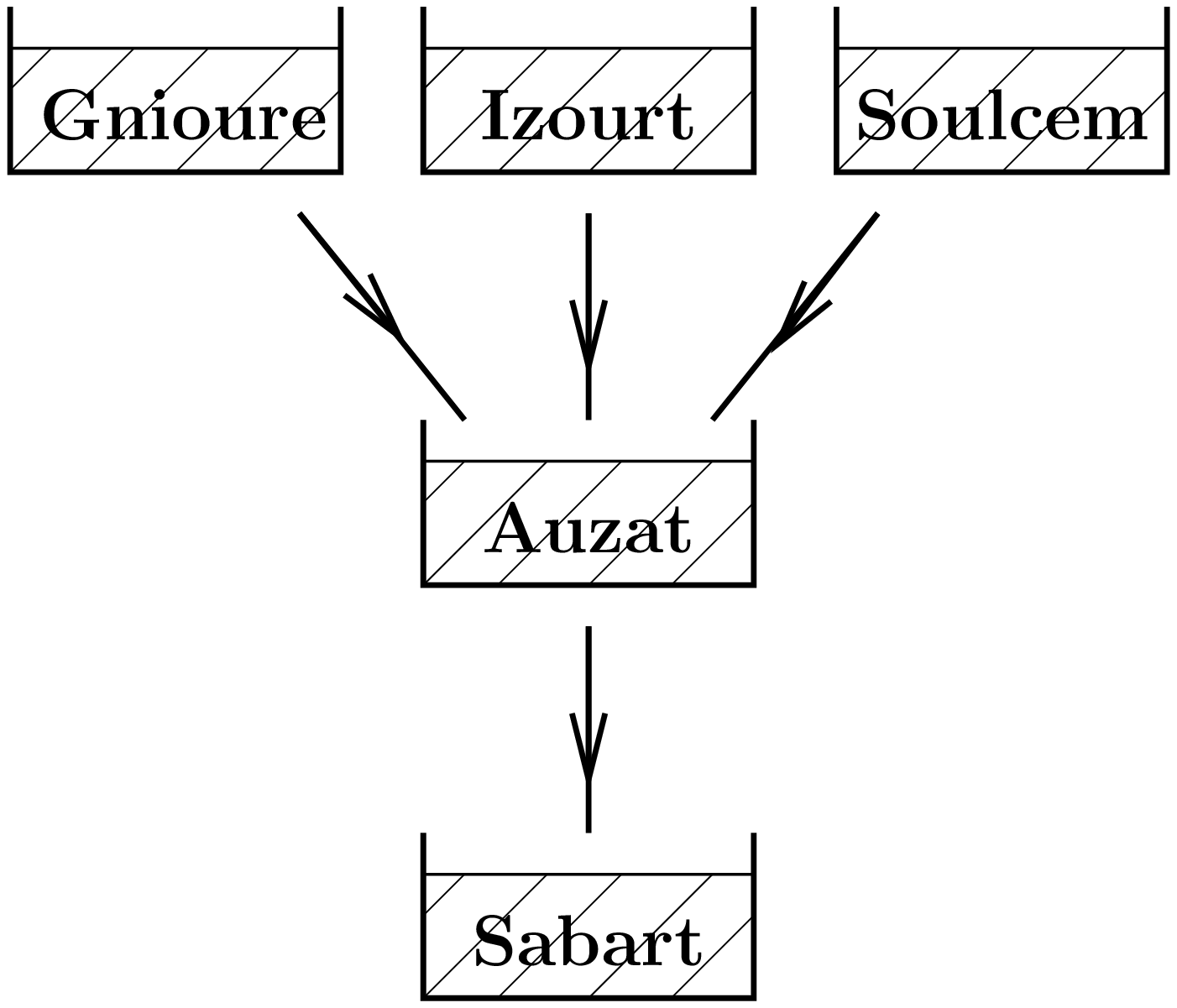} \hspace{2.0cm} $\;$
  \includegraphics[scale=0.35]{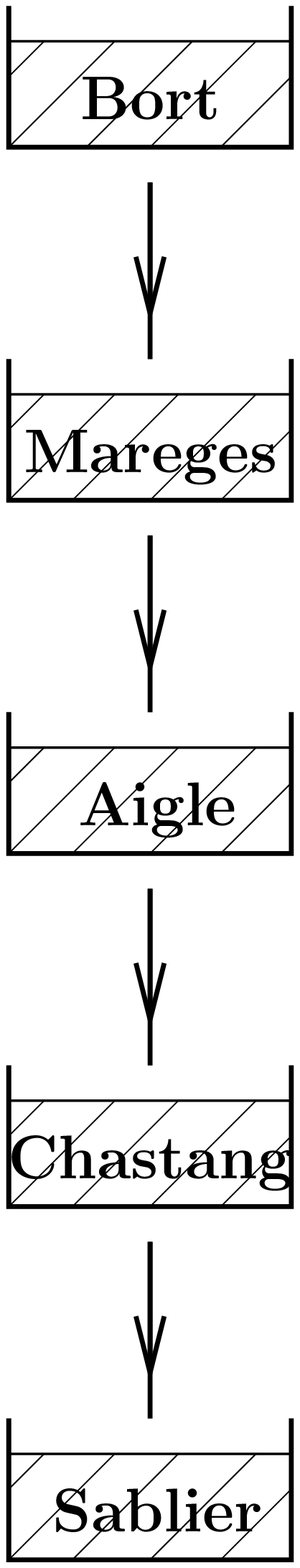}
\end{center}
\vspace{-0.2cm}
\hspace{2.8cm} Vicdessos \hspace{4.0cm} Dordogne
\caption{\label{fig:realistic} Two realistic hydro valleys.}
\end{figure}

The optimization problem is posed again on a time horizon of one year,
with a monthly time step. What mainly differ here from the academic
examples used at \S\ref{sec:num-academic} are the characteristics of
the dams. For example, the Dordogne river valley encompasses large dams
(as ``Bort'' whose capacity is say $400$) and small dams (as ``Mareges''
the capacity of which is equal to $35$, that is, ten times smaller).
This heterogeneity induces numerical difficulties, as the fact to have
at disposal a wide range of possible controls for the small downstream
reservoirs, or the need to use a fine discretization for the state grid
in DP-like methods.
We again assume discrete probability laws with finite support
for the inflows, and we also assume that the available turbine
controls are discrete.

The comparison results of \SDDPc\ and \DADP\ are given in
Table~\ref{tab:realistic}.
\begin{table}[ht!]
\begin{center}
\begin{tabular}{|l|c|c|}
  \hline
  Valley              & \hespa \textbf{Vicdessos} \hespa & \hespa \textbf{Dordogne} \hespa \\
  \hline
  \hline
  \SDDPd\ CPU time    & \bleue{$\mathit{90}$'}           & \bleue{$\mathit{86000}$'}       \\
  \hline
  \SDDPd\ value       & $2234$                           & $21910$                         \\
  \hline
  \SDDPd\ upper bound & $\mathit{2195}$                  & $\mathit{21370}$                \\
  \hline
  \hline
  \SDDPc\ CPU time    & \bleue{$\mathit{9}$'}            & \bleue{$\mathit{17}$'}          \\
  \hline
  \SDDPc\ value       & $2244$                           & $22150$                         \\
  \hline
  \SDDPc\ upper bound & $\mathit{2258}$                  & $\mathit{22310}$                \\
  \hline
  \hline
  \DADP\ CPU time     & \bleue{$\mathit{9}$'}            & \bleue{$\mathit{210}$'}         \\
  \hline
  \DADP\ value        & $2238$                           & $21650$                         \\
  \hline
  \DADP\ dual value   & $\mathit{2286}$                  & $\mathit{22990}$                \\
  \hline
  \hline
  Gap \DADP/\SDDPc    & \rouge{$\mathbf{-0.3}$\%}       & \rouge{$\mathbf{-2.2}$\%}        \\
  \hline
\end{tabular}
\caption{\label{tab:realistic}Results obtained by \SDDPc, \SDDPd\ and \DADP}
\end{center}
\end{table}
As for the academic examples,
\SDDPc\ displays the best results and is therefore used as
the reference. The large number of possible discrete controls
heavily penalizes the \SDDPd\ methods. We also observe that
this large combinatorics is a difficulty even for \DADP,
although the resolution is done dam by dam. Finally,
the gap between \SDDPc\ and \DADP\ remains limited.


\section{Conclusion\label{sec:conclusion}}

In this article, we have depicted a method called DADP
which allows to tackle large-scale stochastic optimal control
problems in discrete time, such as the ones found in the field of
energy management. We have mainly presented the practical aspects
of the method, without deepening in the theoretical issues arising
in the foundations of the method.
A lot of numerical experiments have been presented on hydro valley
problems (``chained models''), which complements the ones
already made on unit commitment problems (``flower models'')
\cite{BartyCarpentierGirardeau09,BartyCarpentierCohenGirardeau10}.
The main conclusions are that DADP, on the one hand converges fast,
and on the other hand gives near-optimal results even when using
a ``crude'' relaxation (here a constant information process~$\va{y}$).
More precisely, DADP allows to deal with optimization problems that
are out of the scope of standard Dynamic Programming, and
beats SDDP for very large hydro valleys in terms of CPU time. We
thus hope to be able to implement DADP for very large stochastic
optimal control problems such as the ones encountered in smart
management of urban districts, involving hundreds of houses and
thus hundred of dynamic variables.

The main perspectives that we see beyond this study are to extend it
in two directions. The first direction consists in implementing the DADP
method for general spatial structures (not only ``flower models''
or ``chain model'', but ``smart-grid models'' involving a planar graph).
The second direction is to implement more sophisticated decomposition methods
than price decomposition. On the one hand we want to make use of
decomposition schemes such that resource allocation or interaction
prediction principle \cite{Cohen78}. On the other hand we want to use
augmented Lagrangian based methods such as alternating direction method
of multiplier (ADMM) and proximal decomposition algorithm (PDA) for decomposition
in order to obtain the nice convergence properties of this kind of methods
(see \cite{Lenoir_RAIRO_2017} for a survey).

Finally, let us mention that a theoretical work has begun in order to provide
foundations of the method \cite{TheseLeclere,leclere2013epiconvergence}
it includes conditions for existence of a multiplier in the $L^{1}$ space
when the optimization problem is posed in $L^{\infty}$ and conditions for
convergence of the Uzawa algorithm in $L^{\infty}$. A lot of work remains
to be done on these questions, mainly to relax the continuity assumption in order
to be able to deal with extended functions, and to obtain more general assumptions
ensuring the convergence of Uzawa algorithm.

\paragraph{Acknowledgments}

This research benefited from the support of the FMJH ``Program Gaspard Monge
for optimization and operations research'', and from the support from EDF.


\section*{References}


\end{document}